\begin{document}

\title{Some determinants involving incomplete Fubini numbers
}  
\author{
Takao Komatsu\\
\small School of Mathematics and Statistics\\[-0.8ex]
\small Wuhan University\\[-0.8ex]
\small Wuhan 430072 China\\[-0.8ex]
\small \texttt{komatsu@whu.edu.cn}\\\\
Jos\'e L. Ram\'{\i}rez\\
\small Departamento de Matem\'aticas\\[-0.8ex]
\small Universidad Nacional de Colombia\\[-0.8ex]
\small Bogot\'a, Colombia\\[-0.8ex] 
\small \texttt{jlramirezr@unal.edu.co} 
}

\date{
\small MR Subject Classifications: Primary 11C20; Secondary 15B05, 05A18, 11B75.
}

\maketitle

\def\fl#1{\left\lfloor#1\right\rfloor}
\def\cl#1{\left\lceil#1\right\rceil}
\def\ang#1{\left\langle#1\right\rangle}
\def\stf#1#2{\left[#1\atop#2\right]} 
\def\sts#1#2{\left\{#1\atop#2\right\}}

\newtheorem{theorem}{Theorem}
\newtheorem{Prop}{Proposition}
\newtheorem{Cor}{Corollary}
\newtheorem{Lem}{Lemma}

\begin{abstract}
We study some properties of restricted and associated Fubini numbers.  In particular, they have the natural extensions of the original Fubini numbers in the sense of determinants.  We also introduce modified Bernoulli and Cauchy numbers and study characteristic properties.\\ 
{\bf Keywords:} Determinantal identities, Fubini numbers, Trudi's formula.    
\end{abstract}


\section{Introduction}

The Fubini numbers (or the ordered Bell numbers) form an integer sequence in which the $n$th term counts the number of weak orderings of a set with $n$ elements.
The Stirling numbers of the second kind $\sts{n}{k}$ counts the number of ways that the set $A=\{1,2,\dots,n\}$ with $n$ elements can be partitioned into $k$ non-empty subsets.
Fubini numbers are defined by
$$
F_n=\sum_{k=0}^n k!\sts{n}{k}
$$
(\cite{Mezo}).
They can be expanded involving binomial coefficients or given by an infinite series.
$$
F_n=\sum_{k=0}^n\sum_{j=0}^k(-1)^{k-j}\binom{k}{j}j^n=\frac{1}{2}\sum_{m=0}^\infty\frac{m^n}{2^m}\,.
$$
The first Fubini numbers are presented as
$$
\{F_n\}_{n=0}^\infty=\{1, 1, 3, 13, 75, 541, 4683, 47293, 545835, 7087261, 102247563, \dots \}
$$
(\cite[A000670]{oeis}).
The (exponential) generating function of Fubini numbers is given by
\begin{equation}
\frac{1}{2-e^t}=\sum_{n=0}^\infty F_n\frac{t^n}{n!}\,.
\label{fubini:gf}
\end{equation}
The Fubini numbers satisfy the recurrence relation:
\begin{equation}
F_n=\sum_{j=1}^n\binom{n}{j}F_{n-j}\,.
\label{fubini:rec}
\end{equation}

The literature contains several generalizations of Stirling numbers; see \cite{mansour-2015a}. Among them, the so-called restricted and associated Stirling numbers of both kinds (cf. \cite{bona-2016a, choijy-2006b,
comtet-1974a, KLM,  KMS,  KomRam, Mezo, Moll}).  The restricted Stirling number of the second kind $\sts{n}{k}_{\le m}$ gives the number of partitions of $n$ elements into $k$ subsets under the restriction that none of the blocks contain more than $m$ elements.  The associated Stirling number of the second kind $\sts{n}{k}_{\ge m}$ gives the number of partitions of an $n$ element set into $k$ subsets such that every block contains at least $m$ elements.

In this paper, we are  interested in the sequence of the number of weak orderings of a set with $n$ elements in blocks with at most (and at least) $m$ elements. These two integer sequences are called the  restricted Fubini  numbers and associated Fubini numbers, respectively.  In particular, we give some  new determinant expressions for these sequences, and for some related sequences such as the modified Bernoulli and Cauchy numbers. We also apply the Trudi's formula to obtain new explicit combinatorial formulas for these sequences.

Recently, studies on the determinantal representations  involving interesting combinatorial sequences appeared.  For example, Bozkurt and Tam \cite{BT} established  formulas for the determinants and inverses of  $r$-circulant matrices of  second-order linear recurrences.  Li and MacHenry \cite{LM} gave a general method for finding permanental and determinantal representations of many families of integer sequences such as Fibonacci and Lucas polynomials. In \cite{Li}, Li obtained three new Fibonacci-Hessenberg matrices and studied its relations with Pell and Perrin sequence.  Xu and Zhou \cite{XZ}  studied a determinantal representation for a generalization of the Stirling numbers, called $r$-Whitney numbers, see also \cite{MR}. More examples can be found in
\cite{SR,  TX,  YB}.

\section{Determinants}

In \cite{Glaisher}, Glaisher gave determinant expressions of several numbers, including Bernoulli numbers $B_n$, Cauchy numbers $c_n$ and Euler numbers $E_n$, defined by
$$
\frac{t}{e^t-1}=\sum_{n=0}^\infty B_n\frac{t^n}{n!},\quad
\frac{t}{\ln(1+t)}=\sum_{n=0}^\infty c_n\frac{t^n}{n!}\quad\text{and}\quad
\frac{1}{\cosh t}=\sum_{n=0}^\infty E_n\frac{t^n}{n!}\,,
$$
respectively.
A determinant expression of the classical Bernoulli numbers is given by
\begin{equation}
B_n=(-1)^n n!\left|
\begin{array}{ccccc}
\frac{1}{2!}&1&&&\\
\frac{1}{3!}&\frac{1}{2!}&&&\\
\vdots&\vdots&\ddots&1&\\
\frac{1}{n!}&\frac{1}{(n-1)!}&\cdots&\frac{1}{2!}&1\\
\frac{1}{(n+1)!}&\frac{1}{n!}&\cdots&\frac{1}{3!}&\frac{1}{2!}
\end{array}
\right|\,.
\label{ber:det}
\end{equation}

A determinant expression of Cauchy numbers is given by
\begin{equation}
c_n=n!\left|
\begin{array}{ccccc}
\frac{1}{2}&1&&&\\
\frac{1}{3}&\frac{1}{2}&&&\\
\vdots&\vdots&\ddots&1&\\
\frac{1}{n}&\frac{1}{n-1}&\cdots&\frac{1}{2}&1\\
\frac{1}{n+1}&\frac{1}{n}&\cdots&\frac{1}{3}&\frac{1}{2}
\end{array}
\right|\,.
\label{cau:det}
\end{equation}
The value of this determinant, that is, $b_n=c_n/n!$ are called Bernoulli numbers of the second kind.

A determinant expression of Euler numbers is given by
\begin{equation}
E_{2n}=(-1)^n (2n)!
\begin{vmatrix}
\frac{1}{2!}& 1 &~& ~&~\\
\frac{1}{4!}&  \frac{1}{2!} & 1 &~&~\\
\vdots & ~  &  \ddots~~ &\ddots~~ & ~\\
\frac{1}{(2n-2)!}& \frac{1}{(2n-4)!}& ~&\frac{1}{2!} &  1\\
\frac{1}{(2n)!}&\frac{1}{(2n-2)!}& \cdots &  \frac{1}{4!} & \frac{1}{2!}
\end{vmatrix}\,.
\label{eul:det}
\end{equation}

Several hypergeometric numbers can be also expressed in similar ways (\cite{AK1,AK2,KY}).
However, all numbers cannot be expressed in such simpler determinant expressions. In general, we have the following relation:

\begin{theorem} \label{propo1}
Let $\{\alpha_n\}_{n\ge 0}$ be a sequence with $\alpha_0=1$, and $R(j)$ be a function independent of $n$. Then
\begin{equation}
\alpha_n=\left|
\begin{array}{ccccc}
R(1)&1&&&\\
R(2)&R(1)&&&\\
\vdots&\vdots&\ddots&1&\\
R(n-1)&R(n-2)&\cdots&R(1)&1\\
R(n)&R(n-1)&\cdots&R(2)&R(1)
\end{array}
\right|\,.
\label{10a}
\end{equation}
if and only if
\begin{equation}
\alpha_n=\sum_{j=1}^n(-1)^{j-1}R(j)\alpha_{n-j}\quad(n\ge 1)
\label{10b}
\end{equation}
with $\alpha_0=1$.
\label{det}
\end{theorem}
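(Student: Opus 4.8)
The plan is to denote by $D_n$ the determinant on the right-hand side of \eqref{10a}, with the convention $D_0=1$, and to prove that $D_n$ satisfies the recurrence \eqref{10b} \emph{unconditionally}. Granting this, the theorem follows at once: if $\alpha_n=D_n$ for all $n$, then $\alpha_n$ inherits the recurrence; conversely, a sequence obeying \eqref{10b} together with $\alpha_0=1$ is uniquely determined, and since $D_0=1$ as well, a straightforward induction on $n$ gives $\alpha_n=D_n$. Thus the entire content of the statement reduces to the single assertion that the Toeplitz--Hessenberg determinant in \eqref{10a} obeys \eqref{10b}.

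To establish that, I would expand $D_n$ along its first column, whose entries are $R(1),R(2),\dots,R(n)$. This yields
\begin{equation*}
D_n=\sum_{i=1}^n(-1)^{i+1}R(i)\,M_{i,1},
\end{equation*}
where $M_{i,1}$ is the minor obtained by deleting row $i$ and column $1$. The crux is then to identify each $M_{i,1}$ with $D_{n-i}$.

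Fixing $i$ and examining the minor, I would split the surviving rows into $\{1,\dots,i-1\}$ and $\{i+1,\dots,n\}$ and the surviving columns into $\{2,\dots,i\}$ and $\{i+1,\dots,n\}$. In the original matrix the entry in position $(r,c)$ equals $R(r-c+1)$ for $r\ge c$, equals $1$ when $c=r+1$, and vanishes when $c>r+1$; hence the rows $1,\dots,i-1$ have zero entries in every column $\ge i+1$, so $M_{i,1}$ is block lower triangular. Its leading $(i-1)\times(i-1)$ block, formed by rows $1,\dots,i-1$ and columns $2,\dots,i$, is itself lower triangular with $1$'s on the diagonal, because deleting the first column carries the superdiagonal of $1$'s onto the diagonal; its determinant is therefore $1$. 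The complementary block, formed by rows $i+1,\dots,n$ and columns $i+1,\dots,n$, is again a Toeplitz--Hessenberg matrix of the same shape and of size $n-i$, with determinant $D_{n-i}$ (the case $i=n$ being covered by $D_0=1$). Multiplying the two diagonal blocks gives $M_{i,1}=D_{n-i}$, and substituting back produces exactly $D_n=\sum_{i=1}^n(-1)^{i-1}R(i)D_{n-i}$, which is \eqref{10b}.

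I expect the main obstacle to be the bookkeeping in the previous paragraph: one must verify carefully that the column deletion shifts the superdiagonal $1$'s onto the diagonal of the top-left block and that no nonzero entry survives above it, so that the block-triangular factorization is valid for \emph{every} $i$, including the boundary cases $i=1$ and $i=n$. Tracking the index shift in $R(r-c+1)$ under the deletion of column $1$ is precisely where an off-by-one slip could occur, but once the entries are written out explicitly the two diagonal blocks decouple cleanly, the recurrence drops out, and the equivalence follows by the uniqueness argument of the first paragraph.
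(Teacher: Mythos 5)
Your proof is correct. The underlying computation is the same as the paper's---a cofactor expansion of the Toeplitz--Hessenberg determinant yielding the recurrence \eqref{10b}---but you organize it differently. The paper expands repeatedly along the \emph{first row}, peeling off one term $(-1)^{j-1}R(j)\alpha_{n-j}$ at a time while invoking the induction hypothesis $D_{n-j}=\alpha_{n-j}$ inside the expansion, and then disposes of the converse direction with a one-line remark. You instead expand once along the \emph{first column} and identify every minor $M_{i,1}$ with $D_{n-i}$ via a block lower-triangular factorization (the top-left block becoming unitriangular after the column deletion, the bottom-right block being a copy of the same Hessenberg matrix of size $n-i$); your index bookkeeping, including the boundary cases $i=1$ and $i=n$, checks out. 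This gives the recurrence $D_n=\sum_{i=1}^n(-1)^{i-1}R(i)D_{n-i}$ as an unconditional identity about the determinants, after which both implications of the theorem follow uniformly from the uniqueness of a sequence satisfying \eqref{10b} with initial value $1$. What your route buys is a cleaner logical separation of the determinant identity from the equivalence statement and a genuinely symmetric treatment of the two directions; what the paper's route buys is avoidance of the block-determinant lemma at the cost of tracking the shapes of the intermediate determinants in the iterated expansion. Either argument is complete and correct.
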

\begin{proof}
When $n=1$, the result is clear because $\alpha_1=R(1)$.
Assume that (\ref{10b}) is valid for any positive integer $n$.  Suppose that (\ref{10a}) is true up to $n-1$.  Then by expanding the first row, we have
\begin{multline*}
\left|
\begin{array}{ccccc}
R(1)&1&&&\\
R(2)&R(1)&&&\\
\vdots&\vdots&\ddots&1&\\
R(n-1)&R(n-2)&\cdots&R(1)&1\\
R(n)&R(n-1)&\cdots&R(2)&R(1)
\end{array}
\right|\\
=R(1)\alpha_{n-1}
-\left|
\begin{array}{ccccc}
R(2)&1&&&\\
R(3)&R(1)&&&\\
\vdots&\vdots&\ddots&1&\\
R(n-1)&R(n-3)&\cdots&R(1)&1\\
R(n)&R(n-2)&\cdots&R(2)&R(1)
\end{array}
\right|\\
=R(1)\alpha_{n-1}-R(2)\alpha_{n-2}
+\left|
\begin{array}{ccccc}
R(3)&1&&&\\
R(4)&R(1)&&&\\
\vdots&\vdots&\ddots&1&\\
R(n-1)&R(n-4)&\cdots&R(1)&1\\
R(n)&R(n-3)&\cdots&R(2)&R(1)
\end{array}
\right|\\
=R(1)\alpha_{n-1}-R(2)\alpha_{n-2}
+\cdots
+(-1)^{n}\left|\begin{array}{cc}
R(n-1)&1\\
R(n)&R(1)
\end{array}\right|\\
=\sum_{j=1}^n(-1)^{j-1}R(j)\alpha_{n-j}=\alpha_n\,.
\end{multline*}
On the contrary, if (\ref{10a}) is true, by expanding the determinant, we get the relation (\ref{10b}).
\end{proof}

Fubini numbers satisfy the required relation.  Therefore, we have a determinant expression of Fubini numbers similarly.

\begin{theorem}\label{teo1}
For $n\ge 1$, we have
$$
F_n=n!\left|
\begin{array}{ccccc}
\frac{1}{1!}&1&&&\\
-\frac{1}{2!}&\frac{1}{1!}&&&\\
\vdots&\vdots&\ddots&1&\\
\frac{(-1)^{n-2}}{(n-1)!}&\frac{(-1)^{n-3}}{(n-2)!}&\cdots&\frac{1}{1!}&1\\
\frac{(-1)^{n-1}}{n!}&\frac{(-1)^{n-2}}{(n-1)!}&\cdots&-\frac{1}{2!}&\frac{1}{1!}
\end{array}
\right|\,.
$$
\label{fubini:det}
\end{theorem}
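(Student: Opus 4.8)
The plan is to obtain this as an immediate consequence of Theorem~\ref{det}, so the first task is to read off the sequence $R(j)$ encoded by the displayed matrix. The entry in row $i$ and column $j$ with $i\ge j$ is $(-1)^{i-j}/(i-j+1)!$, while the superdiagonal entries all equal $1$; writing $k=i-j+1$, the lower-triangular entry at $(i,j)$ is $R(i-j+1)=(-1)^{i-j}/(i-j+1)!$, which matches the pattern of~\eqref{10a} with the single choice $R(k)=(-1)^{k-1}/k!$. Hence the determinant in the statement is exactly the quantity $\alpha_n$ produced by Theorem~\ref{det} for this $R$, and it remains only to identify that $\alpha_n$ with $F_n/n!$.

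Next I would compute the recurrence~\eqref{10b} for this particular $R$. The crucial observation is the sign cancellation $(-1)^{j-1}R(j)=(-1)^{j-1}\cdot(-1)^{j-1}/j!=1/j!$, so Theorem~\ref{det} guarantees that the determinant, call it $D_n$, satisfies $D_n=\sum_{j=1}^n\frac{1}{j!}D_{n-j}$ together with $D_0=1$. It therefore suffices to prove that the normalized Fubini numbers $\alpha_n:=F_n/n!$ obey the very same recurrence and initial condition.

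Finally I would verify this normalized recurrence directly from the Fubini recurrence~\eqref{fubini:rec}. Dividing $F_n=\sum_{j=1}^n\binom{n}{j}F_{n-j}$ by $n!$ and using $\binom{n}{j}/n!=1/\bigl(j!\,(n-j)!\bigr)$ yields $F_n/n!=\sum_{j=1}^n\frac{1}{j!}\cdot\frac{F_{n-j}}{(n-j)!}$, that is, $\alpha_n=\sum_{j=1}^n\frac{1}{j!}\alpha_{n-j}$, while $\alpha_0=F_0/0!=1$. Since a convolution recurrence of this shape expresses $\alpha_n$ through $\alpha_0,\dots,\alpha_{n-1}$ and so, together with the value $\alpha_0=1$, determines the sequence uniquely, we conclude $D_n=\alpha_n=F_n/n!$, which is precisely the claimed identity $F_n=n!\,D_n$.

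I expect no genuine obstacle here: all the structural content is already carried by Theorem~\ref{det}, and the only thing to watch is the sign bookkeeping—confirming that the alternating signs built into the matrix entries are exactly cancelled by the factor $(-1)^{j-1}$ in~\eqref{10b}, so that the all-positive normalized Fubini recurrence with coefficients $1/j!$ emerges. The one point I would state with care is the reading of the general $(i,j)$ entry as $R(i-j+1)$, to be sure the indexing of $R$ is correct before invoking the theorem.
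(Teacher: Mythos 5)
Your proposal is correct and follows exactly the paper's route: the paper's proof is a one-line application of Theorem~\ref{det} with $\alpha_n=F_n/n!$ and $R(j)=(-1)^{j-1}/j!$, which is precisely what you do, only with the sign cancellation $(-1)^{j-1}R(j)=1/j!$ and the normalization of the Fubini recurrence \eqref{fubini:rec} spelled out. No gaps; your version is simply a more detailed write-up of the same argument.
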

\begin{proof}
From the relation (\ref{fubini:rec}),
we apply Theorem \ref{det} with
$$
\alpha_n=\frac{F_n}{n!}\quad\hbox{and}\quad R(j)=\frac{(-1)^{j-1}}{j!
}\,.
$$
\end{proof}

\noindent
{\bf Examples.}
Since it is well-known that
$$
\sum_{m=0}^n\binom{n+1}{m}B_m=0\,,
$$
we get
$$
B_n=-\sum_{j=1}^n\frac{-n!}{(j+1)!(n-j)!}B_{n-j}\,.
$$
Applying Theorem \ref{det} with
$$
\alpha_n=(-1)^n\frac{B_n}{n!}\quad\hbox{and}\quad R(j)=\frac{1}{(j+1)!
}\,,
$$
we have the determinant expression (\ref{ber:det}).

Since
\[\frac{c_n}{n!}=\sum_{k=0}^{n-1}\frac{(-1)^{n-k+1}}{n-k+1}\frac{c_k}{k!}=\sum_{j=1}^n\frac{(-1)^{j+1}}{j+1}\frac{c_{n-j}}{(n-j)!}\]
(\cite[Theorem 2.1]{MSV}),
applying Theorem \ref{det} with
$$
\alpha_n=\frac{c_n}{n!}\quad\hbox{and}\quad R(j)=\frac{1}{j+1}\,,
$$
we have the determinant expression (\ref{cau:det}).

Since it is well-known that
$$
\sum_{m=0}^n\binom{2 n}{2 m}E_{2 m}=0\,,
$$
we get
$$
(-1)^n\frac{E_{2 n}}{(2 n)!}=\sum_{j=1}^n\frac{(-1)^{j-1}}{(2 j)!}\frac{(-1)^{n-j}E_{2(n-j)}}{(2 n-2 j)!}\,.
$$
Applying Theorem \ref{det} with
$$
\alpha_n=(-1)^n\frac{E_{2 n}}{(2 n)!}\quad\hbox{and}\quad R(j)=\frac{1}{(2 j)!
}\,,
$$
we have the determinant expression (\ref{eul:det}).

\subsection{Applications of the Trudi's Formula}

We can use the Trudi's formula to obtain an explicit formula for the numbers $\alpha_n$ in Theorem \ref{propo1}.  This relation is known as Trudi's formula \cite[Vol.3, p.214]{Muir},\cite{Trudi} and the case $a_0=1$ of this formula is known as Brioschi's formula \cite{Brioschi},\cite[Vol.3, pp.208--209]{Muir}.

\begin{theorem}[Trudi's formula \cite{Trudi}]
Let $m$ be a positive integer. Then
 \begin{multline}
\begin{vmatrix}
a_1  & a_2   &  \cdots   & & a_m  \\
a_{0}  & a_{1}    &  \cdots  & &   \\
\vdots  &  \vdots &  \ddots  &  & \vdots  \\
0  & 0    &  \cdots  &a_1  & a_{2}  \\
0  & 0   &  \cdots  & a_0  & a_1
\end{vmatrix}\\
=
\sum_{t_1 + 2t_2 + \cdots +mt_m=m}\binom{t_1+\cdots + t_m}{t_1, \dots,t_m}(-a_0)^{m-t_1-\cdots - t_m}a_1^{t_1}a_2^{t_2}\cdots a_m^{t_m}, \label{trudi}
\end{multline}
where $\binom{t_1+\cdots + t_m}{t_1, \dots,t_m}$ is the multinomial coefficient.
\end{theorem}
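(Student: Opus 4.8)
The plan is to reduce the identity to the recurrence characterized in Theorem~\ref{det} and then to read the multinomial sum off a generating function. Denote by $D_m$ the determinant on the left of \eqref{trudi} and set $D_0=1$; the matrix is upper Hessenberg and Toeplitz, with $(i,j)$ entry $a_{j-i+1}$ (and $a_k=0$ for $k<0$). Assuming first that $a_0\neq0$, I would normalize the subdiagonal by multiplying the $i$-th row by $a_0^{-i}$ and the $j$-th column by $a_0^{\,j}$: this leaves the determinant unchanged (the two scalings contribute reciprocal factors $a_0^{\mp m(m+1)/2}$) and replaces each entry $a_{j-i+1}=a_k$ by $a_0^{\,k-1}a_k$, so the subdiagonal entries become $1$. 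Transposing, the matrix becomes exactly the one in Theorem~\ref{det} with $R(k)=a_0^{\,k-1}a_k$.

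By Theorem~\ref{det}, $D_m$ then satisfies
\[
D_m=\sum_{k=1}^m(-1)^{k-1}R(k)\,D_{m-k}=\sum_{k=1}^m(-a_0)^{k-1}a_k\,D_{m-k},\qquad D_0=1.
\]
Introducing the formal power series $D(x)=\sum_{m\ge0}D_mx^m$ and $A(x)=\sum_{k\ge1}(-a_0)^{k-1}a_kx^k$, this recurrence is equivalent to $D(x)-1=A(x)D(x)$, that is, $D(x)=1/(1-A(x))$.

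It remains to expand $1/(1-A(x))=\sum_{n\ge0}A(x)^n$ by the multinomial theorem and to extract the coefficient of $x^m$. The terms are indexed by tuples $(t_1,t_2,\dots)$ with $\sum_k t_k=n$ and $\sum_k k\,t_k=m$, each contributing $\binom{t_1+\cdots+t_m}{t_1,\dots,t_m}\prod_k\bigl((-a_0)^{k-1}a_k\bigr)^{t_k}$. The decisive observation is that
\[
\sum_k(k-1)t_k=\sum_k k\,t_k-\sum_k t_k=m-(t_1+\cdots+t_m),
\]
so the accumulated powers of $-a_0$ collapse to the single factor $(-a_0)^{\,m-t_1-\cdots-t_m}$. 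This turns $[x^m]D(x)$ into precisely the right-hand side of \eqref{trudi}. Finally, since the normalization required $a_0\neq0$, I would remark that both sides of \eqref{trudi} are polynomials in $a_0,\dots,a_m$ that agree on the Zariski-dense set $\{a_0\neq0\}$, hence coincide identically.

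The main obstacle is obtaining the recurrence with the correct signs and powers $(-a_0)^{k-1}$. A direct cofactor expansion of $D_m$ is unpleasant because the minors it produces are not themselves Toeplitz--Hessenberg matrices of the original shape, so an induction on nested minors would be needed; the scaling-and-transpose reduction to Theorem~\ref{det} is exactly what sidesteps that bookkeeping. After that the generating-function step is routine, and the only genuinely clever point is the exponent identity $m-\sum_k t_k=\sum_k(k-1)t_k$, which is what matches the coefficient extraction to the stated multinomial weight.
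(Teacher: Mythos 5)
Your proof is correct. Note, however, that the paper does not actually prove this statement: Trudi's formula is quoted as a classical result with references to Trudi, Brioschi and Muir, so there is no in-paper argument to compare against. What you supply is a legitimate self-contained derivation, and its structure is worth remarking on: you reduce the Toeplitz--Hessenberg determinant of \eqref{trudi} to the normalized form of Theorem~\ref{det} by the row/column scaling $a_0^{-i}$, $a_0^{\,j}$ (which correctly leaves the determinant invariant and turns the subdiagonal into $1$'s) followed by a transpose, obtaining the recurrence $D_m=\sum_{k=1}^m(-a_0)^{k-1}a_k D_{m-k}$; the generating-function step $D(x)=1/(1-A(x))$, the multinomial expansion, and the exponent identity $\sum_k(k-1)t_k=m-\sum_k t_k$ then deliver the right-hand side exactly, and the Zariski-density argument correctly removes the auxiliary hypothesis $a_0\neq 0$ since both sides are polynomials in $a_0,\dots,a_m$. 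There is no circularity in leaning on Theorem~\ref{det}, since that theorem is proved in the paper by direct cofactor expansion without reference to Trudi's formula; indeed your argument shows that, within this paper's framework, Trudi's formula is a corollary of Theorem~\ref{det} rather than an independent input, which is a tidier logical arrangement than the paper's appeal to the literature.
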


From Trudi's formula,  it is possible to give the combinatorial formula
$$\alpha_n=\sum_{t_1 + 2t_2 + \cdots + nt_n=n}\binom{t_1+\cdots + t_n}{t_1, \dots, t_n}(-1)^{n-t_1-\cdots - t_n}R(1)^{t_1}R(2)^{t_2}\cdots R(n)^{t_n}.$$

In addition, there exists the following inversion formula, which follows from Theorem \ref{propo1}.

\begin{Lem}\label{lema}
If $\{\alpha_n\}_{n\geq 0}$ is a sequence defined by $\alpha_0=1$ and
$$\alpha_n=\begin{vmatrix} R(1) & 1 & & \\
R(2) & \ddots &  \ddots & \\
\vdots & \ddots &  \ddots & 1\\
R(n) & \cdots &  R(2) & R(1) \\
 \end{vmatrix},  \ \text{then} \ R(n)=\begin{vmatrix} \alpha_1 & 1 & & \\
\alpha_2 & \ddots &  \ddots & \\
\vdots & \ddots &  \ddots & 1\\
\alpha_n & \cdots &  \alpha_2 & \alpha_1 \\
 \end{vmatrix}.$$
 Moreover, if
 $$A=\begin{bmatrix}
  1 &  & & \\
\alpha_1 & 1  &   & \\
\vdots & \ddots &  \ddots & \\
\alpha_n& \cdots &  \alpha_1 & 1 \\
 \end{bmatrix}, \ \text{then} \  A^{-1}=\begin{bmatrix}
  1 &  & & \\
R(1) & 1  &   & \\
\vdots & \ddots &  \ddots & \\
R(n) & \cdots &  R(1) & 1 \\
 \end{bmatrix}.$$
\end{Lem}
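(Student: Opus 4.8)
The plan is to read both assertions as two facets of a single fact: that the sequences $(\alpha_n)$ and $(R(n))$, with the convention $R(0)=1$, are convolution inverses once one of them is equipped with alternating signs. First I would invoke Theorem \ref{det} to record that the hypothesis on $\alpha_n$ is equivalent to
$$
\alpha_n=\sum_{j=1}^n(-1)^{j-1}R(j)\alpha_{n-j}\qquad(n\ge1),\quad \alpha_0=1.
$$
Rearranging this as $\sum_{j=0}^n(-1)^jR(j)\alpha_{n-j}=\delta_{n,0}$ puts the relation into the symmetric convolution form that drives everything below; equivalently, in terms of the ordinary generating functions $f(x)=\sum_{n\ge0}\alpha_nx^n$ and $g(x)=\sum_{n\ge0}R(n)x^n$, the hypothesis is precisely $f(x)\,g(-x)=1$.

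For the determinant expression of $R(n)$, the key observation is that the identity $f(x)g(-x)=1$ is invariant under $x\mapsto -x$, a substitution that interchanges the roles of $f$ and $g$ and produces $g(x)f(-x)=1$. Expanding this reflected identity coefficientwise yields the dual recurrence
$$
R(n)=\sum_{j=1}^n(-1)^{j-1}\alpha_j\,R(n-j)\qquad(n\ge1),\quad R(0)=1.
$$
Applying Theorem \ref{det} a second time, now with the roles of the sequences $\alpha$ and $R$ exchanged, converts this recurrence into exactly the claimed Toeplitz--Hessenberg determinant in the entries $\alpha_1,\dots,\alpha_n$. Should one wish to avoid generating functions, the same dual recurrence can be reached by induction on $n$, substituting the original recurrence into a trial expression and reindexing the resulting double sum.

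For the inverse-matrix claim I would use the structural fact that the unit lower-triangular Toeplitz matrices of a fixed size form a commutative algebra isomorphic to the ring of truncated power series, under which $A$ corresponds to $f(x)$. Hence $A^{-1}$ is again unit lower-triangular Toeplitz, and its symbol is the reciprocal $1/f(x)=g(-x)$; reading off the coefficients of $g(-x)$ then identifies the subdiagonal entries of $A^{-1}$ in terms of the $R(n)$. Concretely this amounts to verifying $A\,A^{-1}=I$ entrywise: the $(i,j)$ entry of the product is the convolution $\sum_k \alpha_{i-k}(A^{-1})_{k,j}$, which collapses to $\delta_{i,j}$ precisely by the convolution identity recorded at the outset, and uniqueness of the inverse of a unit-triangular matrix completes the argument. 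The main obstacle throughout is purely the sign bookkeeping in passing between $f$, $g$ and their reflections $f(-x)$, $g(-x)$; once the reflected convolution identity $f(x)g(-x)=1$ is secured, both parts follow mechanically.
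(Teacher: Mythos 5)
Your derivation of the first identity is correct and is essentially what the paper has in mind: Theorem \ref{propo1} turns the hypothesis into the convolution identity $\sum_{j=0}^{n}(-1)^{j}R(j)\alpha_{n-j}=\delta_{n,0}$ (with the convention $R(0)=1$), i.e.\ $f(x)g(-x)=1$; the substitution $x\mapsto -x$ swaps the two sequences, and a second application of Theorem \ref{propo1} gives the Toeplitz--Hessenberg determinant for $R(n)$.

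The second identity is where your argument has a genuine gap, and the ``sign bookkeeping'' you postpone is exactly the point at which it fails. By your own setup the symbol of $A^{-1}$ is $1/f(x)=g(-x)$, whose $n$-th coefficient is $(-1)^{n}R(n)$, not $R(n)$. Accordingly, the entrywise verification does not close: if $(A^{-1})_{k,j}$ were $R(k-j)$, the $(i,j)$ entry of $AA^{-1}$ would be $\sum_{l=0}^{m}\alpha_{m-l}R(l)$ with $m=i-j$, whereas the identity you recorded is $\sum_{l=0}^{m}(-1)^{l}R(l)\alpha_{m-l}=\delta_{m,0}$; these are different sums. Already for $m=1$ the former equals $\alpha_{1}+R(1)=2R(1)$, which is nonzero in general (for the Fubini data $\alpha_{n}=F_{n}/n!$, $R(j)=(-1)^{j-1}/j!$ it gives the value $2$ in the $(2,1)$ position of the product). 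What your generating-function argument actually proves is $(A^{-1})_{i,j}=(-1)^{i-j}R(i-j)$, equivalently $A^{-1}=DBD^{-1}$ with $D=\mathrm{diag}(1,-1,1,\dots)$ and $B$ the matrix displayed in the statement. So the obstruction is not merely in your write-up: the displayed formula for $A^{-1}$ (and the corresponding inverse-matrix displays in Corollaries \ref{coro1}, \ref{coro2} and \ref{coro3}) is off by these alternating signs, and no rearrangement of your argument will produce it as written. You should either insert the factor $(-1)^{i-j}$ into the entries of $A^{-1}$, or reformulate the claim with $A$ built from $(-1)^{n}\alpha_{n}$ so that the signs cancel.
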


From above relations and Theorem \ref{teo1} we obtain an explicit formula for the Fubini sequence.

\begin{Cor}\label{coro1}
For $n\geq 1$
\begin{multline*}
\frac{F_{n}}{n!}=\sum_{t_1 + 2t_2 + \cdots + nt_n=n}\binom{t_1+\cdots + t_n}{t_1, \dots,t_n}(-1)^{n-t_1-\cdots - t_n}\\ \times\left(\frac{1}{1!}\right)^{t_1}\left(\frac{(-1)^1}{2!}\right)^{t_2}\cdots \left(\frac{(-1)^{n-1}}{n!}\right)^{t_n}.
\end{multline*}
Moreover,
$$\begin{vmatrix} \frac{F_1}{1!} & 1 & & \\
\frac{F_2}{2!} & \ddots &  \ddots & \\
\vdots & \ddots &  \ddots & 1\\
\frac{F_n}{n!} & \cdots &  \frac{F_2}{2!} & \frac{F_1}{1!} \\
 \end{vmatrix}=\frac{(-1)^{n-1}}{n!},$$
 and
 $$\begin{bmatrix} 1 &  &  & & \\
\frac{F_1}{1!} & 1 &   &  & \\
\frac{F_2}{2!} & \frac{F_1}{1!}  &  1  &  & \\
\vdots &  &  \ddots &  & \\
\frac{F_n}{n!} & \cdots &  \frac{F_2}{2!} & \frac{F_1}{1!} &1 \\
 \end{bmatrix}^{-1}=\begin{bmatrix} 1 &  &  & & \\
\frac{1}{1!} & 1 &   &  & \\
-\frac{1}{2!} & \frac{1}{1!}  &  1  &  & \\
\vdots &  &  \ddots &  & \\
\frac{(-1)^{n-1}}{n!} & \cdots &  -\frac{1}{2!} & \frac{1}{1!} &1 \\
 \end{bmatrix}.$$
\end{Cor}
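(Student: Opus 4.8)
The plan is to recognize that all three assertions are specializations of machinery already in place, under the single identification coming from Theorem~\ref{teo1}, namely
\[
\alpha_n=\frac{F_n}{n!}\qquad\text{and}\qquad R(j)=\frac{(-1)^{j-1}}{j!}\,.
\]
With this dictionary fixed, nothing new needs to be invented: each displayed equation in the corollary is obtained by feeding these particular values into a general result from the preceding subsection. So the first step is simply to record that Theorem~\ref{teo1} guarantees this $\alpha_n$ and this $R(j)$ satisfy the determinant relation~\eqref{10a}, and hence are eligible inputs for Trudi's formula and for Lemma~\ref{lema}.

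For the first (combinatorial) formula, I would invoke the boxed consequence of Trudi's formula,
\[
\alpha_n=\sum_{t_1+2t_2+\cdots+nt_n=n}\binom{t_1+\cdots+t_n}{t_1,\dots,t_n}(-1)^{n-t_1-\cdots-t_n}R(1)^{t_1}R(2)^{t_2}\cdots R(n)^{t_n}\,,
\]
and substitute $R(j)=(-1)^{j-1}/j!$ together with $\alpha_n=F_n/n!$. The factor $R(j)^{t_j}$ becomes $\bigl((-1)^{j-1}/j!\bigr)^{t_j}$, which matches the $\bigl((-1)^{j-1}/j!\bigr)^{t_j}$ appearing in the stated product term for term, so the identity drops out immediately after replacing $R(j)$ by its value.

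For the second and third displays, the work is carried entirely by Lemma~\ref{lema}. The inversion part of that lemma says that if $\alpha_n$ is the $R$-determinant then $R(n)$ is the corresponding $\alpha$-determinant; reading this with the dictionary above turns the determinant with entries $F_i/i!$ into the value $R(n)=(-1)^{n-1}/n!$, which is exactly the second claim. The matrix statement of Lemma~\ref{lema} then gives the third claim verbatim: the lower-triangular Toeplitz matrix $A$ built from $\alpha_i=F_i/i!$ has inverse built from $R(j)=(-1)^{j-1}/j!$, and writing out the first few entries $1/1!,\,-1/2!,\,\dots,\,(-1)^{n-1}/n!$ reproduces the displayed inverse matrix.

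The only point requiring care—and the closest thing to an obstacle—is the sign bookkeeping, since both $\alpha_n$ and $R(j)$ carry alternating signs and one must confirm that the exponent $(-1)^{j-1}$ inside $R(j)^{t_j}$ is the one that survives rather than being absorbed into the global $(-1)^{n-t_1-\cdots-t_n}$ factor. Checking this against the small cases $n=1,2,3$ (where $F_1/1!=1$, $F_2/2!=3/2$, $F_3/3!=13/6$) against the claimed determinants and against the first column of the inverse matrix would suffice to rule out any sign slip; beyond that, the corollary is a direct transcription of Theorem~\ref{teo1}, Trudi's formula, and Lemma~\ref{lema}.
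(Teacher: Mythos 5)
Your proposal is correct and follows exactly the route the paper intends: the paper derives this corollary by citing Trudi's formula and Lemma~\ref{lema} with the substitution $\alpha_n=F_n/n!$, $R(j)=(-1)^{j-1}/j!$ from Theorem~\ref{teo1}, which is precisely your argument. The sign bookkeeping you flag checks out (e.g.\ for $n=2$ the two partitions give $1+\tfrac12=\tfrac32=F_2/2!$), so nothing further is needed.
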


\noindent
{\bf Example.} For $n=4$, we have the integer partitions 1+1+1+1, 1+1+2, 1+3, 2+2 and 4. Then
\begin{multline*}
F_4=4!\left((-1)^{4-4}\binom{4+0+0+0}{4,0,0,0}\left(\frac{1}{1!}\right)^4  \right. \\ +
  (-1)^{4-2-1}\binom{2+1+0+0}{2,1,0,0}\left(\frac{1}{1!}\right)^2\left(-\frac{1}{2!}\right)  \\
  + (-1)^{4-1-1}\binom{1+0+1+0}{1,0,1,0}\left(\frac{1}{1!}\right)\left(\frac{1}{3!}\right) \\
  + \left. (-1)^{4-2}\binom{0+2+0+0}{0,2,0,0}\left(-\frac{1}{2!}\right)^2  + (-1)^{4-1}\binom{0+0+0+1}{0,0,0,1}\left(-\frac{1}{4!}\right)\right)\\
  =4!\left( 1 + \frac32 + \frac 13 + \frac14 + \frac{1}{24}\right)=75.\end{multline*}

\section{Restricted Fubini numbers}

The {\it restricted Fubini numbers} are defined by
$$
F_{n,\le m}=\sum_{k=0}^n k!\sts{n}{k}_{\le m}\,.
$$
They satisfy the recurrence relation:
\begin{equation}
F_{n,\le m}=\sum_{j=1}^m\binom{n}{j}F_{n-j,\le m}\,.
\label{resfubini:rec}
\end{equation}

\begin{theorem}\label{thm_rFegenf}
The (exponential) generating function of restricted Fubini numbers is given by
$$
\frac{1}{1-t-\frac{t^2}{2!}-\cdots-\frac{t^m}{m!}}=\sum_{n=0}^\infty F_{n,\le m}\frac{t^n}{n!}\,.
$$
\label{r-fubini:gf}
\end{theorem}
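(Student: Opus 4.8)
The plan is to translate the recurrence (\ref{resfubini:rec}) into a functional equation for the generating function. Write $f(t)=\sum_{n=0}^\infty F_{n,\le m}\frac{t^n}{n!}$ and set $g(t)=\sum_{j=1}^m\frac{t^j}{j!}$, so that the asserted identity is equivalent to $f(t)\bigl(1-g(t)\bigr)=1$, i.e. $f(t)=1+f(t)g(t)$. First I would compute the coefficient of $t^N/N!$ in the product $f(t)g(t)$ using the exponential-generating-function convolution rule: since $g$ has coefficients $b_j=1$ for $1\le j\le m$ and $b_j=0$ otherwise, this coefficient equals $\sum_{j=1}^m\binom{N}{j}F_{N-j,\le m}$, where the terms with $j>N$ vanish automatically because $\binom{N}{j}=0$. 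By the recurrence (\ref{resfubini:rec}) this is exactly $F_{N,\le m}$ for every $N\ge 1$.

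Next I would account for the constant term. Because $g(0)=0$, the product $f(t)g(t)$ has no constant term, whereas $f(t)$ has constant term $F_{0,\le m}=1$ (the unique weak ordering of the empty set). Hence $f(t)g(t)$ and $f(t)-1$ agree in every coefficient: they agree for $N\ge 1$ by the previous step, and both vanish at $N=0$. This yields the functional equation $f(t)-1=f(t)g(t)$, and solving for $f(t)$ gives $f(t)=1/\bigl(1-g(t)\bigr)$, which is precisely the claimed formula.

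Alternatively, one can argue combinatorially via the symbolic method: a weak ordering of an $n$-set into blocks of size at most $m$ is precisely a (possibly empty) \emph{sequence} of nonempty labeled blocks, each of cardinality between $1$ and $m$. A single such block has exponential generating function $g(t)=\sum_{j=1}^m t^j/j!$, and the labeled sequence construction $\mathrm{SEQ}(g)$ has exponential generating function $\sum_{k\ge 0}g(t)^k=1/\bigl(1-g(t)\bigr)$. The only point requiring care in either approach is the bookkeeping at $N=0$ and remembering that the stated recurrence is applied only for $n\ge 1$ (it fails at $n=0$, where the right-hand side is empty while $F_{0,\le m}=1$); beyond this no genuine obstacle arises, since the verification reduces to a direct coefficient comparison once the recurrence is in hand.
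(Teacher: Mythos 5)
Your argument is correct, but it takes a genuinely different route from the paper. You start from the recurrence (\ref{resfubini:rec}), convert it via the exponential convolution rule into the functional equation $f(t)\bigl(1-g(t)\bigr)=1$ with $g(t)=\sum_{j=1}^m t^j/j!$, and solve; your handling of the constant term ($F_{0,\le m}=1$, $g(0)=0$) and of the vanishing terms with $j>N$ is careful and right. The paper instead works directly from the definition $F_{n,\le m}=\sum_{k} k!\sts{n}{k}_{\le m}$: it plugs in the known generating function $\frac{1}{k!}\,g(t)^k=\sum_n \sts{n}{k}_{\le m}\frac{t^n}{n!}$ of the restricted Stirling numbers of the second kind, swaps the order of summation, and sums the resulting geometric series $\sum_{k\ge 0}g(t)^k$. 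Your second, symbolic-method sketch is essentially the paper's proof in disguise (a weak ordering is a sequence of blocks, and $\mathrm{SEQ}$ gives the geometric series). The trade-off: the paper's proof needs only the definition and the restricted Stirling generating function, and in fact the recurrence (\ref{resfubini:rec}) could then be \emph{deduced} from the resulting identity; your primary proof takes that recurrence as an input, which the paper states without proof, so to be self-contained you should note its one-line combinatorial justification (condition on the first block of the weak ordering, which has some size $j$ with $1\le j\le m$ chosen in $\binom{n}{j}$ ways). With that remark added, your derivation is complete and, if anything, slightly more elementary since it reduces to a single coefficient comparison.
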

\begin{proof}
Since the generating function of the restricted Stirling numbers of the second kind is given by
$$
\frac{1}{k!}\left(t+\frac{t^2}{2}+\cdots+\frac{t^m}{m!}\right)^k=\sum_{n=k}^{m k}\sts{n}{k}_{\le m}\frac{t^n}{n!}\,,
$$
we have
\begin{align*}
\sum_{n=0}^\infty F_{n,\le m}\frac{t^n}{n!}&=\sum_{n=0}^\infty\sum_{k=0}^n k!\sts{n}{k}_{\le m}\frac{t^n}{n!}=\sum_{k=0}^\infty k!\sum_{n=k}^\infty\sts{n}{k}_{\le m}\frac{t^n}{n!}\\
&=\sum_{k=0}^\infty k!\frac{1}{k!}\left(t+\frac{t^2}{2}+\cdots+\frac{t^m}{m!}\right)^k=\frac{1}{1-t-\frac{t^2}{2}-\cdots-\frac{t^m}{m!}}\,.
\end{align*}
\end{proof}

\begin{theorem}
For $n\ge 1$,
$$
F_{n,\le m}=n!\left|
\begin{array}{ccccccc}
\frac{1}{1!}&1&0&&&&0\\
-\frac{1}{2!}&\frac{1}{1!}&1&&&\\
\vdots&\vdots&\vdots&\ddots&&&\\
\frac{(-1)^{m-1}}{m!}&\frac{(-1)^{m-2}}{(m-1)!}&\frac{(-1)^{m-3}}{(m-2)!}&\cdots&&&\\
0&\frac{(-1)^{m-1}}{m!}&&&&1&0\\
\vdots&&&&&\frac{1}{1!}&1\\
0&\cdots&0&\frac{(-1)^{m-1}}{m!}&\cdots&-\frac{1}{2!}&\frac{1}{1!}
\end{array}
\right|\,.
$$
Namely, all the values $\frac{(-1)^{j-1}}{j!}$ ($m+1\le j\le n$) in the determinant in Theorem \ref{fubini:det} are replaced by $0$.
\label{r-fubini:det}
\end{theorem}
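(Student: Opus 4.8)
The plan is to apply Theorem~\ref{det} in exactly the same manner as in the proof of Theorem~\ref{fubini:det}, the only new feature being that the restricted recurrence (\ref{resfubini:rec}) truncates its sum at $j=m$ instead of running all the way to $j=n$; this truncation is precisely what will produce the zero entries in the determinant.

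First I would normalize the recurrence. Setting $\alpha_n=F_{n,\le m}/n!$ and dividing (\ref{resfubini:rec}) by $n!$, the binomial coefficient $\binom{n}{j}=\frac{n!}{j!(n-j)!}$ simplifies the right-hand side to
$$
\alpha_n=\sum_{j=1}^m\frac{1}{j!}\,\frac{F_{n-j,\le m}}{(n-j)!}=\sum_{j=1}^m\frac{1}{j!}\,\alpha_{n-j}\,,
$$
and note that $\alpha_0=F_{0,\le m}/0!=1$. I would then match this against hypothesis (\ref{10b}) by defining
$$
R(j)=\begin{cases}\dfrac{(-1)^{j-1}}{j!}&\text{if }1\le j\le m\,,\\[1.2ex]0&\text{if }j>m\,.\end{cases}
$$
With this choice one has $(-1)^{j-1}R(j)=1/j!$ for $j\le m$ and $(-1)^{j-1}R(j)=0$ for $j>m$, so the truncated sum above coincides with $\sum_{j=1}^n(-1)^{j-1}R(j)\alpha_{n-j}$. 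Theorem~\ref{det} then applies and gives the determinant (\ref{10a}) for these $R(j)$.

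The remaining step is simply to read off the entries. The first column of (\ref{10a}) is $R(1),R(2),\dots,R(n)$, which equals $\frac{1}{1!},-\frac{1}{2!},\dots,\frac{(-1)^{m-1}}{m!}$ in its first $m$ rows and vanishes thereafter; more generally each Toeplitz diagonal carries a copy of $R(i-k+1)$, which is nonzero only when $1\le i-k+1\le m$. This is exactly the banded Hessenberg pattern displayed in the statement, in which every entry $\frac{(-1)^{j-1}}{j!}$ with $m+1\le j\le n$ from Theorem~\ref{fubini:det} has become $0$. I expect no genuine obstacle here; the one point deserving a moment's care is the bookkeeping---verifying that the vanishing of $R(j)$ for $j>m$ yields precisely the claimed shifted band of nonzero entries rather than an off-by-one error when transcribing from the abstract form (\ref{10a}) to the explicit matrix shown.
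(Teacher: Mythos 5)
Your proof is correct, and it reaches the theorem by a cleaner route than the paper does. The paper proves Theorem \ref{r-fubini:det} from scratch: it runs a fresh induction on $n$, expanding the banded determinant along its first row term by term and invoking the recurrence (\ref{resfubini:rec}) only at the final step --- in effect re-deriving the special case of Theorem \ref{det} for this particular matrix. You instead observe that Theorem \ref{det} already covers the situation verbatim once one takes $R(j)=\frac{(-1)^{j-1}}{j!}$ for $1\le j\le m$ and $R(j)=0$ for $j>m$: nothing in the hypothesis of Theorem \ref{det} forbids vanishing values of $R$, the normalized recurrence $\alpha_n=\sum_{j=1}^{\min(n,m)}\frac{1}{j!}\,\alpha_{n-j}$ is exactly (\ref{10b}) for this choice (the terms with $j>m$ vanish by fiat, and those with $j>n$ are absent because $\binom{n}{j}=0$ there), and the zero entries of the band are precisely the positions $(i,k)$ where $R(i-k+1)$ vanishes, i.e.\ $i-k+1>m$. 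This is the same device the paper itself uses for the unrestricted case (Theorem \ref{fubini:det}), so your argument is shorter and more uniform. What the paper's hands-on expansion buys in exchange is an explicit display of how the band structure interacts with the successive row expansions; the authors reuse essentially the same computation in the associated case (Theorem \ref{a-fubini:det}), where the bookkeeping is more delicate. Your one flagged concern --- a possible off-by-one in transcribing from (\ref{10a}) to the displayed matrix --- checks out: the first column carries $R(1),\dots,R(m)$ followed by zeros, exactly as shown.
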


\noindent
{\it Remark.}
When $m\ge n$, the determinant in Theorem \ref{r-fubini:det} matches that in Theorem \ref{fubini:det}.   In fact, $F_{n,\le m}=F_n$ when $m\ge n$.

\begin{proof}[Proof of Theorem \ref{r-fubini:det}]
The result is trivial for $n=1$, because $F_{1,\le m}=1$ ($m\ge 1$).
Assume that the result is true, up to $n-1$.  Then, by expanding the determinant at the first row,
\begin{align*}
&\left|
\begin{array}{ccccccc}
\frac{1}{1!}&1&0&&&&0\\
-\frac{1}{2!}&\frac{1}{1!}&1&&&\\
\vdots&\vdots&\vdots&\ddots&&&\\
\frac{(-1)^{m-1}}{m!}&\frac{(-1)^{m-2}}{(m-1)!}&\frac{(-1)^{m-3}}{(m-2)!}&\cdots&&&\\
0&\frac{(-1)^{m-1}}{m!}&&&&1&0\\
\vdots&&&&&\frac{1}{1!}&1\\
0&\cdots&0&\frac{(-1)^{m-1}}{m!}&\cdots&-\frac{1}{2!}&\frac{1}{1!}
\end{array}
\right|\\
&=\frac{1}{1!}\frac{F_{n-1,\le m}}{(n-1)!}
-\left|
\begin{array}{ccccccc}
-\frac{1}{2!}&1&0&&&&0\\
\frac{1}{3!}&\frac{1}{1!}&1&&&\\
\vdots&\vdots&\vdots&\ddots&&&\\
\frac{(-1)^{m-1}}{m!}&\frac{(-1)^{m-2}}{(m-1)!}&\frac{(-1)^{m-3}}{(m-2)!}&\cdots&&&\\
0&\frac{(-1)^{m-1}}{m!}&&&&1&0\\
\vdots&&&&&\frac{1}{1!}&1\\
0&\cdots&0&\frac{(-1)^{m-1}}{m!}&\cdots&-\frac{1}{2!}&\frac{1}{1!}
\end{array}
\right|\\
&=\frac{1}{1!}\frac{F_{n-1,\le m}}{(n-1)!}+\frac{1}{2!}\frac{F_{n-2,\le m}}{(n-2)!}+\cdots\\
&\quad +\frac{1}{m!}\frac{F_{n-m,\le m}}{(n-m)!}
-\left|
\begin{array}{ccccccc}
0&1&0&&&&0\\
0&\frac{1}{1!}&1&&&\\
\vdots&\vdots&\vdots&\ddots&&&\\
0&\frac{(-1)^{m-2}}{(m-1)!}&\frac{(-1)^{m-3}}{(m-2)!}&\cdots&&&\\
0&\frac{(-1)^{m-1}}{m!}&&&&1&0\\
\vdots&&&&&\frac{1}{1!}&1\\
0&\cdots&0&\frac{(-1)^{m-1}}{m!}&\cdots&-\frac{1}{2!}&\frac{1}{1!}
\end{array}
\right|\\
&=\sum_{j=1}^m\frac{1}{j!}\frac{F_{n-j,\le m}}{(n-j)!}=\frac{F_{n,\le m}}{n!}\,.
\end{align*}
Here, we used the relation (\ref{resfubini:rec}).
\end{proof}

\noindent
{\bf Example.}
By Theorem \ref{fubini:det},
$$
6!\left|\begin{array}{ccccc}
1&1&0&0&0\\
-\frac{1}{2}&1&1&0&0\\
\frac{1}{6}&-\frac{1}{2}&1&1&0\\
-\frac{1}{24}&\frac{1}{6}&-\frac{1}{2}&1&1\\
\frac{1}{120}&-\frac{1}{24}&\frac{1}{6}&-\frac{1}{2}&1
\end{array}\right|
=541
$$
and
$$
5!\left|\begin{array}{cccccc}
1&1&0&0&0\\
-\frac{1}{2}&1&1&0&0&0\\
\frac{1}{6}&-\frac{1}{2}&1&1&0&0\\
-\frac{1}{24}&\frac{1}{6}&-\frac{1}{2}&1&1&0\\
\frac{1}{120}&-\frac{1}{24}&\frac{1}{6}&-\frac{1}{2}&1&1\\
-\frac{1}{720}&\frac{1}{120}&-\frac{1}{24}&\frac{1}{6}&-\frac{1}{2}&1
\end{array}\right|
=4683\,.
$$
Since
\[\frac{1}{2-e^t}
=1+t+\frac{3 t^2}{2}+\frac{13 t^3}{6}+\frac{25 t^4}{8}
+\frac{541 t^5}{120}+\frac{1561 t^6}{240}+\frac{47293 t^7}{5040}+\frac{36389 t^8}{2688}+\cdots\,\]
we also have
$$
F_5=\frac{541}{120}5!=541\quad\hbox{and}\quad F_6=\frac{1561}{240}6!=4683\,.
$$
On the other hand, by Theorem \ref{r-fubini:det},
$$
5!\left|\begin{array}{ccccc}
1&1&0&0&0\\
-\frac{1}{2}&1&1&0&0\\
\frac{1}{6}&-\frac{1}{2}&1&1&0\\
0&\frac{1}{6}&-\frac{1}{2}&1&1\\
0&0&\frac{1}{6}&-\frac{1}{2}&1
\end{array}\right|
=530
$$
and
$$
6!\left|\begin{array}{cccccc}
1&1&0&0&0\\
-\frac{1}{2}&1&1&0&0&0\\
\frac{1}{6}&-\frac{1}{2}&1&1&0&0\\
0&\frac{1}{6}&-\frac{1}{2}&1&1&0\\
0&0&\frac{1}{6}&-\frac{1}{2}&1&1\\
0&0&0&\frac{1}{6}&-\frac{1}{2}&1
\end{array}\right|
=4550\,.
$$
Since
\[\frac{1}{1-(t+\frac{t^2}{2}+\frac{t^3}{6})}
=1+t+\frac{3 t^2}{2}+\frac{13 t^3}{6}+\frac{37 t^4}{12}+\frac{53 t^5}{12}+\frac{455 t^6}{72}+\frac{217 t^7}{24}+\frac{207 t^8}{16}+\cdots\,\]
we also have
$$
F_{5,\le 3}=\frac{53}{12}5!=530\quad\hbox{and}\quad F_{6,\le 3}=\frac{455}{72}6!=4550\,.
$$

From Trudi's formula and Lemma \ref{lema} we obtain the following relations.
\begin{Cor}\label{coro2}
For $n\geq 1$
\begin{multline*}
\frac{F_{n, \leq m}}{n!}=\sum_{t_1 + 2t_2 + \cdots + mt_m=n}\binom{t_1+\cdots + t_m}{t_1, \dots, t_m}\\ \times(-1)^{n-t_1-\cdots - t_m}\left(\frac{1}{1!}\right)^{t_1}\left(\frac{(-1)^1}{2!}\right)^{t_2}\cdots \left(\frac{(-1)^{m-1}}{m!}\right)^{t_m}.
\end{multline*}
Moreover,
$$\begin{vmatrix} \frac{F_{1, \leq m}}{1!} & 1 & & \\
\frac{F_{2, \leq m}}{2!} & \ddots &  \ddots & \\
\vdots & \ddots &  \ddots & 1\\
\frac{F_{n, \leq m}}{n!} & \ddots &  \frac{F_{2, \leq m}}{2!} & \frac{F_{1, \leq m}}{1!} \\
 \end{vmatrix}=0,$$
and
 \begin{multline*}
 \begin{bmatrix} 1 &  &  & & \\
\frac{F_{1, \leq m}}{1!} & 1 &   &  & \\
\frac{F_{2, \leq m}}{2!} & \frac{F_{1, \leq m}}{1!}  &  1  &  & \\
\vdots &  &  \ddots &  & \\
\frac{F_{n, \leq m}}{n!} & \cdots &  \frac{F_{2, \leq m}}{2!} & \frac{F_{1, \leq m}}{1!} &1 \\
 \end{bmatrix}^{-1}\\
 =\begin{bmatrix} 1 &  &  & & \\
\frac{1}{1!} & 1 &   &  & &&\\
-\frac{1}{2!} & \frac{1}{1!}  &  1  & &&  & \\
\vdots &  &  \ddots &  &  &&\\
\frac{(-1)^{m-1}}{m!} & \frac{(-1)^{m-2}}{(m-1)!} & \frac{(-1)^{m-3}}{(m-2)!} &   \cdots &  &  & \\
0 &\frac{(-1)^{m-1}}{m!} & &   \cdots &  & 1 & \\
\vdots & & &   \cdots &  & \frac{1}{1!} &1 \\
0 & &  &   \cdots & \frac{(-1)^{m-1}}{m!} &\cdots &\frac{1}{1!}& 1 \\
 \end{bmatrix}.
 \end{multline*}
\end{Cor}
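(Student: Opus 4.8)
The plan is to recognize that the restricted Fubini numbers fit the determinantal framework of Theorem~\ref{det} under the single choice
\[
\alpha_n=\frac{F_{n,\le m}}{n!},\qquad
R(j)=\begin{cases}\dfrac{(-1)^{j-1}}{j!}&1\le j\le m,\\ 0&j>m.\end{cases}
\]
This identification is exactly what is encoded in the recurrence~(\ref{resfubini:rec}), and equivalently in the banded determinant of Theorem~\ref{r-fubini:det}: substituting this $R$ into~(\ref{10b}) gives $\alpha_n=\sum_{j=1}^m\frac{1}{j!}\alpha_{n-j}$, which after clearing factorials is precisely $F_{n,\le m}=\sum_{j=1}^m\binom{n}{j}F_{n-j,\le m}$. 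The point is that $R(j)$ vanishes for $j>m$, which is the only feature distinguishing the restricted case from Corollary~\ref{coro1}. Once this is in place, all three assertions are direct specializations of results already established, so the work is bookkeeping rather than new analysis.

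For the combinatorial identity I would substitute the above $R$ into the general Trudi-type formula stated immediately after Trudi's theorem, namely $\alpha_n=\sum_{t_1+2t_2+\cdots+nt_n=n}\binom{t_1+\cdots+t_n}{t_1,\dots,t_n}(-1)^{n-t_1-\cdots-t_n}R(1)^{t_1}\cdots R(n)^{t_n}$. Since $R(j)=0$ for $j>m$, any index tuple with $t_j>0$ for some $j>m$ contributes a zero factor $R(j)^{t_j}$ and drops out; the constraint $\sum_i it_i=n$ therefore collapses to $\sum_{i=1}^m it_i=n$, and what remains is exactly the displayed sum. This is the first assertion.

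For the determinant identity I would invoke the first inversion formula of Lemma~\ref{lema} with $\alpha_k=F_{k,\le m}/k!$: the displayed Toeplitz determinant equals $R(n)$, which is $0$ precisely when $n>m$. (For $n\le m$ one has $F_{n,\le m}=F_n$ and the value is $(-1)^{n-1}/n!$, in agreement with Corollary~\ref{coro1}; so the vanishing statement is to be read for $n>m$.) For the inverse-matrix identity I would apply the second part of Lemma~\ref{lema}: the lower-triangular Toeplitz matrix $A$ built from $\alpha_k=F_{k,\le m}/k!$ has inverse the lower-triangular Toeplitz matrix built from $R(1),\dots,R(n)$, and reading off these values yields exactly the banded matrix displayed, whose subdiagonal band $\tfrac{1}{1!},-\tfrac{1}{2!},\dots,\tfrac{(-1)^{m-1}}{m!}$ is followed by zeros.

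The main obstacle is not analytic but organizational. The essential care is to verify the identification of $R(j)$ against the banded determinant of Theorem~\ref{r-fubini:det} so that $R(j)=0$ for $j>m$ is genuinely forced, and to state the determinant identity with the correct range $n>m$, since for $n\le m$ the value is nonzero. With those two points settled, each of the three claims follows by a one-line specialization of the Trudi formula and Lemma~\ref{lema}.
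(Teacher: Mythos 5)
Your proposal is correct and follows essentially the same route as the paper, which derives the corollary by specializing Trudi's formula and Lemma~\ref{lema} to the banded determinant of Theorem~\ref{r-fubini:det}, i.e.\ to $R(j)=(-1)^{j-1}/j!$ for $j\le m$ and $R(j)=0$ for $j>m$. Your observation that the vanishing of the Toeplitz determinant should be read for $n>m$ (it equals $(-1)^{n-1}/n!$ when $n\le m$) is a correct and worthwhile precision that the paper leaves implicit.
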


\section{Associated Fubini numbers}

The {\it associated Fubini numbers} are defined by
$$
F_{n,\ge m}=\sum_{k=0}^n k!\sts{n}{k}_{\ge m}\,.
$$
They satisfy the recurrence relation:
\begin{equation}
F_{n,\ge m}=\sum_{j=m}^n\binom{n}{j}F_{n-j,\ge m}\,.
\label{assofubini:rec}
\end{equation}

\begin{theorem}
The (exponential) generating function of restricted Fubini numbers is given by
$$
\frac{1}{1-\frac{t^m}{m!}-\frac{t^{m+1}}{(m+1)!}-\cdots}=\sum_{n=0}^\infty F_{n,\ge m}\frac{t^n}{n!}\,.
$$
\label{a-fubini:gf}
\end{theorem}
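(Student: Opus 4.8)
The plan is to mirror the proof of Theorem~\ref{thm_rFegenf} verbatim, simply replacing the restricted Stirling numbers by the associated ones. The single ingredient I need is the exponential generating function of the associated Stirling numbers of the second kind. Since a block counted by $\sts{n}{k}_{\ge m}$ must contain at least $m$ elements, the EGF of one block is $\sum_{j\ge m} t^j/j! = \frac{t^m}{m!}+\frac{t^{m+1}}{(m+1)!}+\cdots$, and by the exponential formula (the blocks of a set partition being unordered) one obtains
$$
\frac{1}{k!}\left(\frac{t^m}{m!}+\frac{t^{m+1}}{(m+1)!}+\cdots\right)^k=\sum_{n=mk}^\infty\sts{n}{k}_{\ge m}\frac{t^n}{n!}\,.
$$
This is the exact analogue of the identity quoted at the start of the proof of Theorem~\ref{thm_rFegenf}, with the lower truncation of the block generating series shifted from degree $m$ (cut above) to degree $m$ (cut below).

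Next I would substitute the defining formula $F_{n,\ge m}=\sum_{k=0}^n k!\sts{n}{k}_{\ge m}$ into $\sum_{n\ge 0}F_{n,\ge m}t^n/n!$ and interchange the order of summation over $n$ and $k$, exactly as in Theorem~\ref{thm_rFegenf}. The factor $k!$ then cancels the $1/k!$ from the block formula above, leaving
$$
\sum_{n=0}^\infty F_{n,\ge m}\frac{t^n}{n!}=\sum_{k=0}^\infty\left(\frac{t^m}{m!}+\frac{t^{m+1}}{(m+1)!}+\cdots\right)^k\,,
$$
which is a geometric series in $u=\frac{t^m}{m!}+\frac{t^{m+1}}{(m+1)!}+\cdots$ and sums to $1/(1-u)$, the claimed generating function.

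There is no genuine obstacle here; the only point requiring a word of care is the legitimacy of the geometric summation and of the rearrangement of the double series. Both are justified at the level of formal power series: because $m\ge 1$, the series $u$ has zero constant term (indeed $t$-adic valuation $\ge m$), so $1/(1-u)=\sum_{k\ge 0}u^k$ converges $t$-adically and each coefficient of $t^n$ receives contributions from only finitely many $k$ (those with $mk\le n$). This finiteness is precisely what makes the interchange of the $n$- and $k$-summations valid, and it is the same justification already implicit in the restricted case.
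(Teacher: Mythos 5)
Your proof is correct and follows essentially the same route as the paper's: quote the EGF of the associated Stirling numbers of the second kind, substitute the definition of $F_{n,\ge m}$, interchange the two sums so that $k!$ cancels $1/k!$, and sum the resulting geometric series. The only difference is cosmetic — you state the correct summation range $n\ge mk$ for the block generating function (the paper's displayed formula carries over the bounds $n=k$ to $mk$ from the restricted case, evidently a typo) and you make the $t$-adic justification of the rearrangement explicit, which the paper leaves implicit.
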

\begin{proof}
Since the generating function of the associated Stirling numbers of the second kind is given by
$$
\frac{1}{k!}\left(\frac{t^m}{m!}+\frac{t^{m+1}}{(m+1)!}+\cdots\right)^k=\sum_{n=k}^{m k}\sts{n}{k}_{\ge m}\frac{t^n}{n!}\,,
$$
we have
\begin{align*}
\sum_{n=0}^\infty F_{n,\ge m}\frac{t^n}{n!}
&=\sum_{n=0}^\infty\sum_{k=0}^n k!\sts{n}{k}_{\ge m}\frac{t^n}{n!}=\sum_{k=0}^\infty k!\sum_{n=k}^\infty\sts{n}{k}_{\ge m}\frac{t^n}{n!}\\
&=\sum_{k=0}^\infty k!\frac{1}{k!}\left(\frac{t^m}{m!}+\frac{t^{m+1}}{(m+1)!}+\cdots\right)^k=\frac{1}{1-\frac{t^m}{m!}-\frac{t^{m+1}}{(m+1)!}-\cdots}\,.
\end{align*}
\end{proof}

\begin{theorem}
For $n\ge m\ge 1$,
$$
F_{n,\ge m}=n!\left|
\begin{array}{ccccccc}
0&1&0&\cdots&&&0\\
\vdots&0&1&0&\cdots&&\vdots\\
0&\vdots&\ddots&\ddots&&&\\
\frac{(-1)^{m-1}}{m!}&0&&&&&\vdots\\
\vdots&\ddots&&&&\ddots&0\\
\frac{(-1)^{n-2}}{(n-1)!}&&&\ddots&&0&1\\
\frac{(-1)^{n-1}}{n!}&\frac{(-1)^{n-2}}{(n-1)!}&\cdots&\frac{(-1)^{m-1}}{m!}&0&\cdots&0
\end{array}
\right|\,.
$$
Namely, all the values $\frac{(-1)^{j-1}}{j!}$ ($1\le j\le m-1$) in the determinant in Theorem \ref{fubini:det} are replaced by $0$.
\label{a-fubini:det}
\end{theorem}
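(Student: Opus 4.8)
The plan is to derive this determinant exactly as Theorem~\ref{fubini:det} was derived, namely by a single application of the equivalence in Theorem~\ref{det}, only with a different choice of the sequence $R(j)$. First I would set $\alpha_n = F_{n,\ge m}/n!$ and
$$
R(j)=\begin{cases}0,&1\le j\le m-1,\\ \dfrac{(-1)^{j-1}}{j!},&j\ge m.\end{cases}
$$
Two hypotheses of Theorem~\ref{det} are then immediate: $R$ does not depend on $n$, and $\alpha_0=F_{0,\ge m}=1$ (the empty set admits the single empty weak ordering, which vacuously meets the block-size constraint; equivalently, $1$ is the constant term of the generating function in Theorem~\ref{a-fubini:gf}).

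Next I would check that this $\alpha_n$ satisfies the recurrence~(\ref{10b}). Dividing the associated recurrence~(\ref{assofubini:rec}) by $n!$ and using $\binom{n}{j}/n!=1/\bigl(j!\,(n-j)!\bigr)$ gives $\alpha_n=\sum_{j=m}^{n}\frac{1}{j!}\,\alpha_{n-j}$. On the other hand, since $R(j)=0$ for $j<m$ while $(-1)^{j-1}R(j)=(-1)^{j-1}\frac{(-1)^{j-1}}{j!}=\frac{1}{j!}$ for $j\ge m$, the right-hand side of~(\ref{10b}) collapses to the very same sum $\sum_{j=m}^{n}\frac{1}{j!}\,\alpha_{n-j}$. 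The key point is this sign cancellation: the two factors of $(-1)^{j-1}$ square to $1$. Hence~(\ref{10b}) holds, so Theorem~\ref{det} yields $\alpha_n$ as the Hessenberg determinant~(\ref{10a}) built from this $R$, and multiplying by $n!$ gives the stated formula.

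The one step that needs genuine care is confirming that the Hessenberg matrix~(\ref{10a}) formed from the piecewise $R$ is literally the displayed matrix. Because entry $(i,j)$ with $i\ge j$ equals $R(i-j+1)$, zeroing $R(1),\dots,R(m-1)$ kills the diagonal and the first $m-2$ subdiagonals, while $R(m)=\frac{(-1)^{m-1}}{m!}$ first appears in the $(m,1)$ position and then slides down the subdiagonal $i-j=m-1$; the superdiagonal entries stay $1$. This index bookkeeping is routine, but it is precisely where the visual form of the statement is verified, and it also recovers the remark that only the entries indexed $1\le j\le m-1$ of Theorem~\ref{fubini:det} are altered. I expect this matrix-shape verification, rather than any analytic difficulty, to be the main obstacle.

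Alternatively, to match the self-contained style used for Theorem~\ref{r-fubini:det}, I could instead argue by induction on $n$, expanding the determinant along its first row. The base case $n=m$ then amounts to evaluating the $m\times m$ matrix with $1$'s on the superdiagonal and $\frac{(-1)^{m-1}}{m!}$ in the lower-left corner; the only nonvanishing permutation is the $m$-cycle, so the determinant equals $(-1)^{m-1}\frac{(-1)^{m-1}}{m!}=\frac{1}{m!}$, giving $F_{m,\ge m}=1$ as required (the whole set forms a single block). The inductive step reproduces~(\ref{assofubini:rec}) term by term, exactly as in the proof of Theorem~\ref{r-fubini:det}. Since the Theorem~\ref{det} route is shorter, I would present that one and, if desired, note the induction as a remark.
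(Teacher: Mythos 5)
Your argument is correct, but it is not the route the paper takes. The paper proves Theorem~\ref{a-fubini:det} by a self-contained induction on the determinant $G_{n,m}$: it expands repeatedly along the first row, splits into the cases $n<2m$ (where the expansion terminates in a single $(n-m+1)\times(n-m+1)$ minor whose value is $1/n!$, matching $F_{n,\ge m}=1$) and $n\ge 2m$ (where the expansion reproduces the recurrence (\ref{assofubini:rec}) term by term). Your primary route instead invokes the general equivalence of Theorem~\ref{det} once, with $\alpha_n=F_{n,\ge m}/n!$ and the piecewise $R(j)$ vanishing for $j\le m-1$; the sign cancellation $(-1)^{j-1}R(j)=1/j!$ turns (\ref{10b}) into exactly (\ref{assofubini:rec}) divided by $n!$, and the only remaining work is the index bookkeeping confirming that the Hessenberg matrix built from this $R$ is the displayed one. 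This is a legitimate and in fact shorter proof: it also silently covers the degenerate range $1\le n<m$ (both sides are $0$, since the sum in (\ref{10b}) is empty and the matrix has zero first column above row $m$), whereas the paper's case analysis starts only at $n\ge m$. What the paper's longer induction buys is independence from Theorem~\ref{det} and an explicit view of how the zero pattern delays the appearance of nonzero minors, which is what makes the $n<2m$ base case transparent. Your alternative inductive sketch, including the evaluation of the $m\times m$ base case via the unique $m$-cycle, is essentially the paper's argument and is also correct.
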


\noindent
{\it Remark.}
When $m=1$, the determinant in Theorem \ref{a-fubini:det} matches that in Theorem \ref{fubini:det}.   In fact, $F_{n,\ge 1}=F_n$.

\begin{proof}[Proof of Theorem \ref{a-fubini:det}]
For convenience, put
$$
G_{n,m}:=\left|
\begin{array}{ccccccc}
0&1&0&\cdots&&&0\\
\vdots&0&1&0&\cdots&&\vdots\\
0&\vdots&\ddots&\ddots&&&\\
\frac{(-1)^{m-1}}{m!}&0&&&&&\vdots\\
\vdots&\ddots&&&&\ddots&0\\
\frac{(-1)^{n-2}}{(n-1)!}&&&\ddots&&0&1\\
\frac{(-1)^{n-1}}{n!}&\frac{(-1)^{n-2}}{(n-1)!}&\cdots&\frac{(-1)^{m-1}}{m!}&0&\cdots&0
\end{array}
\right|\,.
$$
Expanding the determinant at the first row of $G_{n,m}$ continually,  if $n<2 m$, then
\begin{align*}
G_{n,m}&=(-1)^{m-1}\left|\begin{array}{cccc}
\frac{(-1)^{m-1}}{(m-1)!}&1&&\\
\vdots&&&\\
\vdots&&&1\\
\frac{(-1)^{n-1}}{n!}&0&\cdots&0
\end{array}\right|\\
&=(-1)^{m-1}(-1)^{n-m}\frac{(-1)^{n-1}}{n!}=\frac{1}{n!}\,. \end{align*}
Since $F_{n,\ge m}=1$ ($n<2 m$), we have $F_{n,\ge m}=n! G_{n,m}$, as desired.

If $n\ge 2 m$, then
\begin{align*}
G_{n,m}&=(-1)^{m-1}
\left|\begin{array}{cccc}
\frac{(-1)^{m-1}}{m!}&1&&\\
\vdots&&&\\
\vdots&&&1\\
\frac{(-1)^{n-1}}{n!}&\frac{(-1)^{n-m+1}}{(n-m)!}&&0
\end{array}\right|\\
&=\frac{1}{m!}\left|\begin{array}{cccccc}
0&1&0&&&\\
\vdots&&&&&\\
0&&&&&\\
\frac{(-1)^{m-1}}{m!}&&&&&0\\
\vdots&&&&&1\\
\frac{(-1)^{n-m+1}}{(n-m)!}&\cdots&\frac{(-1)^{m-1}}{m!}&0&\cdots&0
\end{array}\right|\\
&\quad-(-1)^{m-1}\left|\begin{array}{cccc}
\frac{(-1)^{m}}{(m+1)!}&1&&\\
\vdots&&&\\
\vdots&&&1\\
\frac{(-1)^{n-1}}{n!}&\frac{(-1)^{n-m}}{(n-m-1)!}&&0
\end{array}\right|\\
&=\frac{1}{m!}\frac{F_{n-m,\ge m}}{(n-m)!}+\frac{1}{(m+1)!}\left|\begin{array}{cccccc}
0&1&0&&&\\
\vdots&&&&&\\
0&&&&&\\
\frac{(-1)^{m-1}}{m!}&&&&&0\\
\vdots&&&&&1\\
\frac{(-1)^{n-m}}{(n-m-1)!}&\cdots&\frac{(-1)^{m-1}}{m!}&0&\cdots&0
\end{array}\right|\\
&\quad-(-1)^{m}\left|\begin{array}{cccc}
\frac{(-1)^{m+1}}{(m+2)!}&1&&\\
\vdots&&&\\
\vdots&&&1\\
\frac{(-1)^{n-1}}{n!}&\frac{(-1)^{n-m+1}}{(n-m-2)!}&&0
\end{array}\right|=\sum_{j=m}^n\frac{1}{j!}\frac{F_{n-j,\ge m}}{(n-j)!}=\frac{F_{n,\ge m}}{n!}\,.
\end{align*}
Here, we use the relation (\ref{assofubini:rec}).
\end{proof}

\noindent
{\bf Example.}

By Theorem \ref{a-fubini:det},
$$
5!\left|\begin{array}{ccccc}
0&1&0&0&0\\
0&0&1&0&0\\
\frac{1}{6}&0&0&1&0\\
-\frac{1}{24}&\frac{1}{6}&0&0&1\\
\frac{1}{120}&-\frac{1}{24}&\frac{1}{6}&0&0
\end{array}\right|
=1
$$
and
$$
6!\left|\begin{array}{cccccc}
0&1&0&0&0&0\\
0&0&1&0&0&0\\
\frac{1}{6}&0&0&1&0&0\\
-\frac{1}{24}&\frac{1}{6}&0&0&1&0\\
\frac{1}{120}&-\frac{1}{24}&\frac{1}{6}&0&0&1\\
-\frac{1}{720}&\frac{1}{120}&-\frac{1}{24}&\frac{1}{6}&0&0
\end{array}\right|
=21\,.
$$
Since
\[\frac{1}{1-(\frac{t^3}{6}+\frac{t^4}{24}+\cdots)}
=1+\frac{t^3}{6}+\frac{t^4}{24}+\frac{t^5}{120}+\frac{7 t^6}{240}
+\frac{61 t^8}{13440}+\frac{2101 t^9}{362880}+\cdots\,,\]
we also have
$$
F_{5,\ge 3}=\frac{1}{6}6!=1\quad\hbox{and}\quad F_{6,\ge 3}=\frac{7}{240}6!=21\,.
$$

From Trudi's formula and Lemma \ref{lema} we obtain the following relations.
\begin{Cor}\label{coro3}
For $n\geq 1$
\begin{multline}\frac{F_{n, \geq m}}{n!}=\sum_{mt_m + (m+1)t_{m+1} + \cdots + nt_n=n}\binom{t_m+\cdots + t_n}{t_m, \dots,t_n}(-1)^{n - t_m - \cdots - t_n}\\\times \left(\frac{(-1)^{m-1}}{m!}\right)^{t_m}\left(\frac{(-1)^{m}}{(m+1)!}\right)^{t_{m+1}}\cdots \left(\frac{(-1)^{n-1}}{n!}\right)^{t_n}.
\end{multline}
Moreover,
$$\begin{vmatrix} \frac{F_{1, \geq m}}{1!} & 1 & & \\
\frac{F_{2, \geq m}}{2!} & \ddots &  \ddots & \\
\vdots & \ddots &  \ddots & 1\\
\frac{F_{n, \geq m}}{n!} & \ddots &  \frac{F_{2, \geq m}}{2!} & \frac{F_{1, \geq m}}{1!} \\
 \end{vmatrix}=\frac{(-1)^{n-1}}{n!},$$
and
 \begin{multline*}
 \begin{bmatrix} 1 &  &  & & \\
\frac{F_{1, \geq m}}{1!} & 1 &   &  & \\
\frac{F_{2, \geq m}}{2!} & \frac{F_{1, \geq m}}{1!}  &  1  &  & \\
\vdots &  &  \ddots &  & \\
\frac{F_{n, \geq m}}{n!} & \cdots &  \frac{F_{2, \geq m}}{2!} & \frac{F_{1, \geq m}}{1!} &1 \\
 \end{bmatrix}^{-1}\\=\begin{bmatrix} 1 &  &  & & \\
0 & 1 &   &  & &&\\
 & 0  &  1  & &&  & \\
\vdots &  &  \ddots &  &  &&\\
\frac{(-1)^{m-1}}{m!} & 0 &  &   \cdots &  &  & \vdots \\
\vdots & \ddots & &    &  & \ddots & 0\\
\frac{(-1)^{n-2}}{(n-1)!} & & &   \cdots &  & 0&1 \\
\frac{(-1)^{n-1}}{n!}  &  \frac{(-1)^{n-2}}{(n-1)!}  & \cdots &  \frac{(-1)^{m-1}}{m!}   & 0&\cdots & 0 & 1 \\
 \end{bmatrix}.
 \end{multline*}
\end{Cor}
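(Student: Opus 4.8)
The plan is to obtain all three assertions from one identification. Theorem~\ref{a-fubini:det} exhibits $\alpha_n:=F_{n,\ge m}/n!$ as the Hessenberg determinant of the form (\ref{10a}) attached to the coefficient function
$$
R(j)=\begin{cases} 0, & 1\le j\le m-1,\\ \dfrac{(-1)^{j-1}}{j!}, & j\ge m.\end{cases}
$$
Indeed, reading off the first column of the determinant in Theorem~\ref{a-fubini:det} and comparing with (\ref{10a}) shows that its entries are exactly $R(1),R(2),\dots,R(n)$, so by Theorem~\ref{det} the sequence $\alpha_n=F_{n,\ge m}/n!$ is precisely the sequence generated by this $R$ (equivalently, it satisfies $\alpha_n=\sum_{j=m}^n\frac{1}{j!}\alpha_{n-j}$, which is just (\ref{assofubini:rec})). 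Once this is in place, the explicit formula is a specialization of the Trudi combinatorial identity and the two matrix identities are specializations of Lemma~\ref{lema}; this is the same route already used for Corollaries~\ref{coro1} and~\ref{coro2}.

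For the explicit formula I would substitute the above $R$ into the general combinatorial expansion following Trudi's formula, namely $\alpha_n=\sum_{t_1+2t_2+\cdots+nt_n=n}\binom{t_1+\cdots+t_n}{t_1,\dots,t_n}(-1)^{n-t_1-\cdots-t_n}R(1)^{t_1}\cdots R(n)^{t_n}$. The single point needing care is the collapse of the index set: since $R(1)=\cdots=R(m-1)=0$, any tuple in which some $t_j$ with $j<m$ is positive contributes a factor $R(j)^{t_j}=0$ and drops out. Hence only the tuples with $t_1=\cdots=t_{m-1}=0$ survive, the constraint $\sum_j jt_j=n$ becomes $mt_m+(m+1)t_{m+1}+\cdots+nt_n=n$, and both the multinomial coefficient $\binom{t_1+\cdots+t_n}{t_1,\dots,t_n}$ and the sign exponent $n-t_1-\cdots-t_n$ reduce to involve only $t_m,\dots,t_n$. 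Inserting $R(j)=(-1)^{j-1}/j!$ for $j\ge m$ then yields exactly the displayed sum.

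For the remaining two identities I would invoke Lemma~\ref{lema} directly with $\alpha_n=F_{n,\ge m}/n!$ and the $R$ above. The inversion half of the lemma gives that the Hessenberg determinant built from the entries $\alpha_1,\dots,\alpha_n$ equals $R(n)$, which is $(-1)^{n-1}/n!$ in the range $n\ge m$ where this is the relevant coefficient (for $1\le n\le m-1$ one instead has $R(n)=0$, consistent with $F_{n,\ge m}=0$, so the stated value is to be read for $n\ge m$). The matrix half gives that the inverse of the unit lower triangular Toeplitz matrix $A$ with subdiagonals $\alpha_1,\alpha_2,\dots$ is the analogous matrix with subdiagonals $R(1),R(2),\dots$; the only bookkeeping is to confirm that these $R(j)$ reproduce the displayed pattern of zeros in the first $m-1$ subdiagonal slots followed by $(-1)^{j-1}/j!$, which is immediate from the definition of $R$. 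I expect no genuine obstacle here: the lone subtle step is the vanishing-term reduction of the Trudi sum, and the rest is a direct transcription of Lemma~\ref{lema}.
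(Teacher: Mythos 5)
Your proposal is correct and follows essentially the same route as the paper: it identifies $F_{n,\ge m}/n!$ with the Hessenberg determinant of Theorem \ref{a-fubini:det} attached to $R(j)=0$ for $1\le j\le m-1$ and $R(j)=(-1)^{j-1}/j!$ for $j\ge m$, and then specializes Trudi's formula and Lemma \ref{lema}, which is precisely all the paper does (``From Trudi's formula and Lemma \ref{lema} we obtain the following relations''). Your explicit treatment of the vanishing terms in the Trudi sum and your caveat that the determinant identity reads $0$ rather than $(-1)^{n-1}/n!$ when $1\le n\le m-1$ are correct refinements of details the paper leaves implicit.
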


\section{Modified incomplete Bernoulli and Cauchy numbers}

In \cite{Ko5,KMS}, restricted Cauchy numbers $c_{n,\le m}$ and associated Cauchy  numbers $c_{n,\ge m}$ are introduced as
\begin{equation}
\sum_{n=0}^\infty c_{n,\le m}\frac{t^n}{n!}=\frac{e^{F_m(t)}-1}{F_m(t)}
\label{r-c:gen}
\end{equation}
and
\begin{equation}
\sum_{n=0}^\infty c_{n,\ge m}\frac{t^n}{n!}=\frac{e^{\ln(1+t)-F_{m-1}(t)}-1}{\ln(1+t)-F_{m-1}(t)}\,,
\label{a-c:gen}
\end{equation}
respectively, where
$$
F_m(t)=t-\frac{t^2}{2}+\cdots+(-1)^{m-1}\frac{t^m}{m}\quad(m\ge 1)\,.
$$
Both incomplete Cauchy numbers are natural extensions of the original Cauchy numbers $c_n$ because they have expressions:
$$
 c_{n,\le m}=\sum_{k=0}^n \stf{n}{k}_{\le m}   \frac{(-1)^{n-k}}{k+1}
$$
and
$$
c_{n,\ge m}=\sum_{k=0}^n\stf{n}{k}_{\geq m}\frac{(-1)^{n-k}}{k+1}
$$
with $c_n=c_{n,\le\infty}=c_{n,\ge 1}$.
However, such incomplete Cauchy numbers cannot be natural extensions of the original one in terms of the determinants, though Fubini numbers do as seen in Theorems \ref{fubini:det}, \ref{r-fubini:det} and \ref{a-fubini:det}.
Therefore, we introduced {\it modified} incomplete Cauchy numbers.  The {\it modified restricted Cauchy numbers} $c_{n,\le m}^\ast$ are defined by
\begin{equation}
\frac{t}{F_m(t)}=\sum_{n=0}^\infty c_{n,\le m}^\ast\frac{t^n}{n!}\quad(m\ge 2)
\label{m-r-c:gen}
\end{equation}
instead of (\ref{r-c:gen}).
The {\it modified associated Cauchy numbers} $c_{n,\ge m}^\ast$ are defined by
\begin{equation}
\frac{t}{\ln(1+t)-F_{m-1}(t)+t}=\sum_{n=0}^\infty c_{n,\ge m}^\ast\frac{t^n}{n!}\quad(m\ge 2)
\label{m-a-c:gen}
\end{equation}
instead of (\ref{a-c:gen}).
Note that $c_n=c_{n,\le m}^\ast$ if $n\le m-1$, and $c_n=c_{n,\ge 2}^\ast$.

\begin{theorem}
For integers $n\ge 1$ and $m\ge 2$,
$$
c_{n,\le m}^\ast=n!\left|\begin{array}{cccccc}
\frac{1}{2}&1&0&&&\\
\vdots&\ddots&\ddots&\ddots&&\\
\frac{1}{m}&&&&&\\
0&\ddots&&&&0\\
&&&&&1\\
&&0&\frac{1}{m}&\cdots&\frac{1}{2}
\end{array}\right|\,.
$$
\label{m-r-c:det}
\end{theorem}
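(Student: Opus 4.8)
The plan is to mirror exactly the derivation of Theorem~\ref{fubini:det} and of the Bernoulli/Cauchy/Euler examples that follow Theorem~\ref{det}: first extract from the defining generating function~(\ref{m-r-c:gen}) a linear recurrence of the shape~(\ref{10b}), and then invoke Theorem~\ref{det} with the appropriate choice of $\alpha_n$ and $R(j)$.

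First I would clear the denominator in~(\ref{m-r-c:gen}). Writing $F_m(t)=t\bigl(1-\tfrac{t}{2}+\cdots+(-1)^{m-1}\tfrac{t^{m-1}}{m}\bigr)$, dividing~(\ref{m-r-c:gen}) by $t$ and rearranging gives
$$
1=\left(\sum_{i=0}^{m-1}\frac{(-1)^i}{i+1}\,t^i\right)\left(\sum_{n=0}^\infty c_{n,\le m}^\ast\frac{t^n}{n!}\right)\,.
$$
Comparing constant terms yields $c_{0,\le m}^\ast=1$, which supplies the normalization $\alpha_0=1$ required by Theorem~\ref{det}. Comparing the coefficient of $t^n$ for $n\ge 1$ (where the left side contributes $0$) gives
$$
\sum_{i=0}^{\min(n,m-1)}\frac{(-1)^i}{i+1}\,\frac{c_{n-i,\le m}^\ast}{(n-i)!}=0\,,
$$
and isolating the $i=0$ term produces
$$
\frac{c_{n,\le m}^\ast}{n!}=\sum_{i=1}^{\min(n,m-1)}\frac{(-1)^{i-1}}{i+1}\,\frac{c_{n-i,\le m}^\ast}{(n-i)!}\,.
$$

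Next I would recognize this as an instance of~(\ref{10b}). Setting $\alpha_n=c_{n,\le m}^\ast/n!$ and
$$
R(j)=\begin{cases}\dfrac{1}{j+1}, & 1\le j\le m-1,\\[1mm] 0, & j\ge m,\end{cases}
$$
the recurrence above becomes precisely $\alpha_n=\sum_{j=1}^n(-1)^{j-1}R(j)\alpha_{n-j}$, since every term with $j\ge m$ vanishes. Theorem~\ref{det} then delivers the determinant~(\ref{10a}) for $\alpha_n$; multiplying by $n!$ and inserting the values of $R$ gives the asserted matrix, in which the main diagonal carries $R(1)=1/2$, the superdiagonal carries $1$, the next $m-2$ subdiagonals carry $R(2)=1/3,\dots,R(m-1)=1/m$, and every lower subdiagonal is zero.

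Because the computation is purely a matter of unwinding the generating function, I do not expect a genuine obstacle. The only point requiring care is the truncation: one must check that the finite-degree polynomial $F_m(t)/t$ forces $R(j)=0$ for all $j\ge m$, and it is exactly this vanishing that clips the lower subdiagonals and yields the band structure of the matrix here, in contrast with the full Hessenberg form of Theorem~\ref{fubini:det}.
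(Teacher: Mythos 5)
Your proposal is correct and follows essentially the same route as the paper: both derive the recurrence $\frac{c_{n,\le m}^\ast}{n!}=\sum_{j=1}^{\min(n,m-1)}\frac{(-1)^{j+1}}{j+1}\frac{c_{n-j,\le m}^\ast}{(n-j)!}$ by clearing the denominator in (\ref{m-r-c:gen}), and then convert it into the banded Hessenberg determinant. The only cosmetic difference is that you invoke Theorem~\ref{propo1} directly with the truncated $R(j)$ (zero for $j\ge m$), whereas the paper re-runs the row-expansion induction by hand; the underlying argument is identical.
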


\begin{theorem}
For integers $n$ and $m$ with $n-1\ge m\ge 2$,
$$
c_{n,\ge m}^\ast=n!\left|\begin{array}{cccccc}
0&1&0&&&\\
\vdots&\ddots&\ddots&\ddots&&\\
\frac{1}{m}&&&&\ddots&\\
\frac{1}{m+1}&\ddots&&&\ddots&0\\
\vdots&&\ddots&&\ddots&1\\
\frac{1}{n+1}&\cdots&\frac{1}{m+1}&\frac{1}{m}&\cdots&0
\end{array}\right|\,.
$$
\label{m-a-c:det}
\end{theorem}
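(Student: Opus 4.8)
The plan is to reduce Theorem \ref{m-a-c:det} to Theorem \ref{det}, exactly as in the proof of Theorem \ref{fubini:det}. I would set $\alpha_n=c_{n,\ge m}^\ast/n!$ and exhibit a function $R(j)$ for which the recurrence (\ref{10b}) holds; then the displayed matrix is just the determinant (\ref{10a}) for this $R$, multiplied by $n!$. Since the modified associated Cauchy numbers are given only through the generating function (\ref{m-a-c:gen}), the real work is to turn that generating function into a linear recurrence of the form (\ref{10b}).

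First I would simplify the denominator of (\ref{m-a-c:gen}). From $\ln(1+t)=\sum_{k\ge 1}(-1)^{k-1}t^k/k$ and $F_{m-1}(t)=\sum_{k=1}^{m-1}(-1)^{k-1}t^k/k$ the first $m-1$ terms cancel, so $\ln(1+t)-F_{m-1}(t)=\sum_{k\ge m}(-1)^{k-1}t^k/k$. Hence the denominator is $t+\sum_{k\ge m}(-1)^{k-1}t^k/k=t\bigl(1+\sum_{k\ge m}(-1)^{k-1}t^{k-1}/k\bigr)$, and dividing $t$ by it and substituting $j=k-1$ gives
\[
\sum_{n=0}^\infty \alpha_n t^n=\frac{1}{1+\sum_{j\ge m-1}\frac{(-1)^{j}}{j+1}\,t^j},
\]
so in particular $\alpha_0=1$. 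Clearing the denominator and comparing coefficients of $t^n$ for $n\ge 1$ yields $\alpha_n+\sum_{j=m-1}^n(-1)^j\alpha_{n-j}/(j+1)=0$, that is,
\[
\alpha_n=\sum_{j=m-1}^n (-1)^{j-1}\frac{1}{j+1}\,\alpha_{n-j}\qquad(n\ge 1).
\]
This is precisely (\ref{10b}) with
\[
R(j)=\begin{cases}0,& 1\le j\le m-2,\\ \dfrac{1}{j+1},& j\ge m-1.\end{cases}
\]
Theorem \ref{det} then applies directly and gives $\alpha_n$ as the determinant (\ref{10a}); multiplying by $n!$ produces the stated matrix, whose first column $R(1),\dots,R(n)$ reads $0,\dots,0,\tfrac1m,\tfrac1{m+1},\dots,\tfrac1{n+1}$ with $m-2$ leading zeros, exactly as displayed. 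The hypothesis $n-1\ge m\ge 2$ merely guarantees that this band-plus-tail shape is the generic one drawn. This mirrors the treatment of Theorem \ref{m-r-c:det}, where $t/F_m(t)$ produces $R(j)=1/(j+1)$ for $1\le j\le m-1$ and zero afterwards.

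The only place demanding care — and the sole genuine obstacle — is the index bookkeeping. The cancellation leaves the tail of the denominator beginning at $t^m$, but after factoring out $t$ and shifting $j=k-1$ the first surviving coefficient sits at $t^{m-1}$ and equals $(-1)^{m-1}/m$, so that $R(m-1)=1/m$; i.e. the value $\tfrac1m$ lands in row $m-1$, not row $m$. Getting this off-by-one right is exactly what fixes the matrix, and it is confirmed by the degenerate case $m=2$, where $R(j)=1/(j+1)$ for all $j\ge 1$ and the determinant collapses to the Cauchy determinant (\ref{cau:det}), consistent with $c_n=c_{n,\ge 2}^\ast$.
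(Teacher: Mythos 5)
Your proposal is correct and follows essentially the same route as the paper: both derive the recurrence $\frac{c_{n,\ge m}^\ast}{n!}=\sum_{j=m-1}^n\frac{(-1)^{j+1}}{j+1}\frac{c_{n-j,\ge m}^\ast}{(n-j)!}$ from the generating function (\ref{m-a-c:gen}) after the cancellation $\ln(1+t)-F_{m-1}(t)=\sum_{k\ge m}(-1)^{k-1}t^k/k$, and then convert it into the Hessenberg determinant. The only (cosmetic) difference is that you invoke Theorem \ref{det} directly with $R(j)=0$ for $j\le m-2$ and $R(j)=\frac{1}{j+1}$ for $j\ge m-1$, whereas the paper repeats the first-row cofactor expansion by hand ``similarly to the proof of Theorem \ref{m-r-c:det}''; your index bookkeeping, including $R(m-1)=\frac1m$, matches the displayed matrix.
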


\noindent
{\it Remark.}
If $n\le m-1$, this result is reduced to (\ref{cau:det}).

\begin{proof}[Proof of Theorem \ref{m-r-c:det}]
By (\ref{m-r-c:gen}),
\[1=\left(\sum_{j=0}^{m-1}(-1)^j\frac{t^j}{j+1}\right)\left(\sum_{l=0}^\infty c_{l,\le m}^\ast\frac{t^l}{l!}\right)=\sum_{n=0}^\infty\sum_{j=0}^{\min\{n,m-1\}}\frac{(-1)^j}{j+1}\frac{c_{n-j,\le m}^\ast}{(n-j)!}t^n\,.\]
Hence,
\begin{equation}
\frac{c_{n,\le m}^\ast}{n!}=\sum_{j=1}^m\frac{(-1)^{j+1}}{j+1}\frac{c_{n-j,\le m}^\ast}{(n-j)!}\quad(n\ge 1)\,.
\label{m-r-c:eq}
\end{equation}
By (\ref{m-r-c:gen}), it is clear that $c_{1,\le m}^\ast=\frac{1}{2}$.
Assume that the result is valid up to $n-1$.  Then, by expanding the determinant at the first row, we have
\begin{align*}
&\left|\begin{array}{cccccc}
\frac{1}{2}&1&0&&&\\
\vdots&\ddots&\ddots&\ddots&&\\
\frac{1}{m}&&&&&\\
0&\ddots&&&&0\\
&&&&&1\\
&&0&\frac{1}{m}&\cdots&\frac{1}{2}
\end{array}\right|=\frac{1}{2}\frac{c_{n-1,\le m}^\ast}{(n-1)!}-\left|\begin{array}{cccccc}
\frac{1}{3}&1&0&&&\\
\vdots&\ddots&\ddots&\ddots&&\\
\frac{1}{m}&&&&&\\
0&\ddots&&&&0\\
&&&&&1\\
&&0&\frac{1}{m}&\cdots&\frac{1}{2}
\end{array}\right|\\
&=\frac{1}{2}\frac{c_{n-1,\le m}^\ast}{(n-1)!}-\frac{1}{3}\frac{c_{n-2,\le m}^\ast}{(n-2)!}+\cdots
+(-1)^{m}\left|\begin{array}{ccccc}
\frac{1}{m}&1&0&&\\
0&\frac{1}{2}&&&\\
\vdots&&&&0\\
\vdots&&&&1\\
0&&&&\frac{1}{2}
\end{array}\right|\\
&=\frac{1}{2}\frac{c_{n-1,\le m}^\ast}{(n-1)!}-\frac{1}{3}\frac{c_{n-2,\le m}^\ast}{(n-2)!}+\cdots
+\frac{(-1)^{m}}{m}\frac{c_{n-m+1,\le m}^\ast}{(n-m+1)!}=\frac{c_{n,\le m}^\ast}{n!}\,.
\end{align*}
Here, we used the relation (\ref{m-r-c:eq}).
\end{proof}

\noindent
{\it Remark.}
If $m=2$,  this result is reduced to (\ref{cau:det}).

\begin{proof}[Proof of Theorem \ref{m-a-c:det}]
If $n+1<2 m$, the identity is equivalent to
$$
\frac{c_{n,\ge m}^\ast}{n!}=(-1)^m\left|\begin{array}{ccccccc}
\frac{1}{m}&1&0&&&&\\
\vdots&0&\ddots&&&&\\
&\vdots&&&&&\\
&0&&&&&\\
&\frac{1}{m}&&&&\ddots&0\\
\vdots&\vdots&\ddots&&&0&1\\
\frac{1}{n+1}&\frac{1}{n-m+2}&\cdots&\frac{1}{m}&0&\cdots&0
\end{array}\right|\,.
$$
If $n+1\ge 2 m$, the identity is equivalent to
$$
\frac{c_{n,\ge m}^\ast}{n!}=(-1)^m\left|\begin{array}{cccc}
\frac{1}{m}&1&0&\\
\vdots&0&&0\\
\vdots&\vdots&\ddots&1\\
\frac{1}{n+1}&0&\cdots&0
\end{array}\right|\,.
$$
By (\ref{m-a-c:gen}), we have
\begin{multline*}
1=\left(1+\sum_{j=m-1}^\infty(-1)^j\frac{t^j}{j+1}\right)\left(\sum_{l=0}^\infty c_{l,\ge m}^\ast\frac{t^l}{l!}\right)\\
=\sum_{n=0}^\infty c_{n,\ge m}^\ast\frac{t^n}{n!}+\sum_{n=0}^\infty\sum_{j=m-1}^n\frac{(-1)^j}{j+1}\frac{c_{n-j,\ge m}^\ast}{(n-j)!}t^n\,.
\end{multline*}
Hence, for $n\ge 1$
$$
\frac{c_{n,\ge m}^\ast}{n!}=\sum_{j=m-1}^n\frac{(-1)^{j+1}}{j+1}\frac{c_{n-j,\ge m}^\ast}{(n-j)!}\,.
$$
By using this relation, we can obtain the result, similarly to the proof of Theorem \ref{m-r-c:det}.
\end{proof}

For the modified Cauchy numbers, we can also give an explicit expression similar to that given in Corollaries \ref{coro1}, \ref{coro2} and \ref{coro3}.
\begin{Cor}
For $n\geq 1$
\begin{multline*}
\frac{c_{n,\leq m}^*}{n!}=\sum_{t_1 + 2t_2 + \cdots + (m-1)t_{m-1}=n}\binom{t_1+\cdots + t_{m-1}}{t_1, \dots, t_{m-1}}\\ \times
(-1)^{n-t_1-\cdots - t_{m-1}}\left(\frac{1}{2}\right)^{t_1}\left(\frac{1}{3}\right)^{t_2}\cdots \left(\frac{1}{m}\right)^{t_{m-1}}.
\end{multline*}
Moreover,
$$\begin{vmatrix} \frac{c_{1, \leq m}^*}{1!} & 1 & & \\
\frac{c_{2, \leq m}^*}{2!} & \ddots &  \ddots & \\
\vdots & \ddots &  \ddots & 1\\
\frac{c_{n, \leq m}^*}{n!} & \ddots &  \frac{c_{2, \leq m}^*}{2!} & \frac{c_{1, \leq m}^*}{1!} \\
 \end{vmatrix}=0,$$
and
 \begin{multline*}
 \begin{bmatrix} 1 &  &  & & \\
\frac{c_{1, \leq m}^*}{1!} & 1 &   &  & \\
\frac{c_{2, \leq m}^*}{2!} & \frac{c_{1, \leq m}^*}{1!}  &  1  &  & \\
\vdots &  &  \ddots &  & \\
\frac{c_{n, \leq m}^*}{n!} & \cdots &  \frac{c_{2, \leq m}^*}{2!} & \frac{c_{1, \leq m}^*}{1!} &1 \\
 \end{bmatrix}^{-1}\\=\begin{bmatrix} 1 &  &  & & \\
\frac{1}{2} & 1 &   &  & &&\\
\frac{1}{3} & \frac{1}{2}  &  1  & &&  & \\
\vdots &  &  \ddots &  &  &&\\
\frac{1}{m} & \frac{1}{m-1} & \frac{1}{m-2} &   \cdots &  &  & \\
0 &\frac{1}{m} & &   \cdots &  & 1 & \\
\vdots & & &   \cdots &  & \frac{1}{2} &1 \\
0 & &  &   \cdots & \frac{1}{m} &\cdots &\frac{1}{2}& 1 \\
 \end{bmatrix}.
 \end{multline*}
\end{Cor}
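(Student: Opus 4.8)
The plan is to follow exactly the template established in Corollaries \ref{coro1}, \ref{coro2} and \ref{coro3}, since Theorem \ref{m-r-c:det} already shows that the modified restricted Cauchy numbers fit the general scheme of Theorem \ref{det}. Reading off the first column of the determinant in Theorem \ref{m-r-c:det}, one sees that $\alpha_n=c_{n,\le m}^\ast/n!$ satisfies \eqref{10a}--\eqref{10b} with
$$
R(j)=\frac{1}{j+1}\quad(1\le j\le m-1),\qquad R(j)=0\quad(j\ge m).
$$
Everything then reduces to substituting these values of $R(j)$ into the three general consequences of Theorem \ref{det}: the Trudi expansion, the inversion determinant of Lemma \ref{lema}, and the inverse-matrix statement of Lemma \ref{lema}.

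First I would establish the explicit formula. Inserting the above $R(j)$ into the combinatorial formula
$$\alpha_n=\sum_{t_1 + 2t_2 + \cdots + nt_n=n}\binom{t_1+\cdots + t_n}{t_1, \dots, t_n}(-1)^{n-t_1-\cdots - t_n}R(1)^{t_1}R(2)^{t_2}\cdots R(n)^{t_n}$$
obtained from Trudi's formula, I observe that any tuple with $t_j>0$ for some $j\ge m$ contributes a factor $R(j)=0$ and hence drops out. Thus only the tuples with $t_1+2t_2+\cdots+(m-1)t_{m-1}=n$ survive, and since $R(1)=1/2,\ R(2)=1/3,\dots,R(m-1)=1/m$, this yields the claimed expression.

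Next, for the vanishing determinant and the inverse matrix I would invoke Lemma \ref{lema} directly. The lemma gives $R(n)$ as the $n\times n$ determinant in the entries $\alpha_i=c_{i,\le m}^\ast/i!$; since $R(n)=0$ as soon as $n\ge m$, that determinant vanishes. Likewise, the lemma identifies $A^{-1}$ as the lower-triangular Toeplitz matrix whose first column is $1,R(1),R(2),\dots,R(n)$, and substituting the values of $R(j)$ produces the banded matrix with entries $\tfrac12,\tfrac13,\dots,\tfrac1m$ followed by zeros, exactly as displayed.

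There is no serious obstacle here; the content is entirely a specialization of the already-proved Theorem \ref{m-r-c:det} together with Lemma \ref{lema}. The only point requiring a little care is the bookkeeping of the index shift: the nonzero values of $R$ are $R(j)=1/(j+1)$ ending at $R(m-1)=1/m$ (not $R(m)$), so that the Trudi sum is truncated to partitions of $n$ into parts of size at most $m-1$ and the inverse matrix becomes banded of bandwidth $m-1$. Correspondingly, the middle determinant is to be read as vanishing once $n\ge m$, in parallel with the analogous statement in Corollary \ref{coro2}.
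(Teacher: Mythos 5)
Your proposal is correct and follows exactly the route the paper intends: the corollary is a specialization of Theorem \ref{det}, Trudi's formula and Lemma \ref{lema} with $R(j)=\frac{1}{j+1}$ for $1\le j\le m-1$ and $R(j)=0$ for $j\ge m$, as read off from Theorem \ref{m-r-c:det} and the recurrence (\ref{m-r-c:eq}). Your added remark that the middle determinant vanishes only once $n\ge m$ (since for $n<m$ it equals $R(n)=\frac{1}{n+1}\ne 0$) is a correct and worthwhile clarification of an implicit hypothesis in the statement.
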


\begin{Cor}
For $n\geq 1$
\begin{multline}\frac{c_{n, \geq m}^*}{n!}=\sum_{mt_{m-1} + (m+1)t_{m} + \cdots + (n+1)t_n=n}\binom{t_{m-1}+\cdots + t_n}{t_{m-1}, \dots, t_n}\\ \times(-1)^{n - t_{m-1} - \cdots - t_n}\left(\frac{1}{m}\right)^{t_{m-1}}\left(\frac{1}{m+1}\right)^{t_{m}}\cdots \left(\frac{1}{n+1}\right)^{t_n}.
\end{multline}
Moreover,
$$\begin{vmatrix} \frac{c_{1, \geq m}^*}{1!} & 1 & & \\
\frac{c_{2, \geq m}^*}{2!} & \ddots &  \ddots & \\
\vdots & \ddots &  \ddots & 1\\
\frac{c_{n, \geq m}^*}{n!} & \ddots &  \frac{c_{2, \geq m}^*}{2!} & \frac{c_{1, \geq m}^*}{1!} \\
 \end{vmatrix}=\frac{1}{n+1},$$
and
 \begin{multline*}
 \begin{bmatrix} 1 &  &  & & \\
\frac{c_{1, \geq m}^*}{1!} & 1 &   &  & \\
\frac{c_{2, \geq m}^*}{2!} & \frac{c_{1, \geq m}^*}{1!}  &  1  &  & \\
\vdots &  &  \ddots &  & \\
\frac{c_{n, \geq m}^*}{n!} & \cdots &  \frac{c_{2, \geq m}^*}{2!} & \frac{c_{1, \geq m}^*}{1!} &1 \\
 \end{bmatrix}^{-1}\\=\begin{bmatrix} 1 &  &  & & \\
0 & 1 &   &  & &&\\
 & 0  &  1  & &&  & \\
\vdots &  & &  \ddots  &  &&\\
\frac{1}{m} & 0 &  &   \cdots &  &  & \vdots \\
\vdots & \ddots & &    &  & \ddots & \\
\frac{1}{n} & & &   \cdots &  & 0&1 \\
\frac{1}{n+1}  &  \frac{1}{n}  & \cdots &  \frac{1}{m}   & 0&\cdots & 0 & 1 \\
 \end{bmatrix}.
 \end{multline*}
\end{Cor}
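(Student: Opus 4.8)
The plan is to treat this corollary exactly as Corollaries \ref{coro1}, \ref{coro2}, and \ref{coro3} were treated: feed the recurrence behind Theorem \ref{m-a-c:det} into the combinatorial form of Trudi's formula and into Lemma \ref{lema}. First I would read off the sequence $R(j)$ that realizes $c_{n,\ge m}^\ast/n!$ in the form (\ref{10b}). The proof of Theorem \ref{m-a-c:det} already records the convolution identity $c_{n,\ge m}^\ast/n!=\sum_{j=m-1}^n\frac{(-1)^{j+1}}{j+1}\frac{c_{n-j,\ge m}^\ast}{(n-j)!}$, and matching this with (\ref{10b}) forces $\alpha_n=c_{n,\ge m}^\ast/n!$ together with $R(j)=\frac{1}{j+1}$ for $j\ge m-1$ and $R(j)=0$ for $1\le j\le m-2$ (note $\alpha_0=c_{0,\ge m}^\ast=1$, as required by Lemma \ref{lema}). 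This is precisely the sparse Hessenberg data displayed in Theorem \ref{m-a-c:det}, so that theorem furnishes the identity $\alpha_n=\det(\cdots)$ needed to start.

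For the first displayed formula I would invoke the combinatorial consequence of Trudi's formula stated after (\ref{trudi}), namely $\alpha_n=\sum_{t_1+2t_2+\cdots+nt_n=n}\binom{t_1+\cdots+t_n}{t_1,\dots,t_n}(-1)^{n-t_1-\cdots-t_n}R(1)^{t_1}\cdots R(n)^{t_n}$. Since $R(1)=\cdots=R(m-2)=0$, every monomial containing a factor $R(j)$ with $j\le m-2$ drops out, so the sum collapses to the partitions of $n$ into parts of size at least $m-1$; substituting $R(m-1)=\frac1m,\ R(m)=\frac1{m+1},\dots,R(n)=\frac1{n+1}$ then yields the claimed multinomial expression. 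Here the only point needing care is the relabelling of the indices $t_{m-1},\dots,t_n$ together with the weight constraint $\sum_{j\ge m-1}j\,t_j=n$ that governs the surviving partitions.

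The two matrix assertions are then immediate from Lemma \ref{lema}. Because Theorem \ref{m-a-c:det} exhibits $\alpha_n$ as the Toeplitz--Hessenberg determinant built from this $R$, the inversion in Lemma \ref{lema} returns $R(n)$ as the determinant of the corresponding Toeplitz--Hessenberg matrix in the entries $\alpha_k=c_{k,\ge m}^\ast/k!$; since $R(n)=\frac1{n+1}$ this is exactly the stated value $\frac1{n+1}$. Likewise the second half of Lemma \ref{lema} identifies the inverse of the unipotent lower-triangular matrix $A$ in the $\alpha_k$ with the unipotent lower-triangular Toeplitz matrix in the $R(j)$, and the vanishing of $R(1),\dots,R(m-2)$ produces precisely the band of zeros above the entry $\frac1m$ in the displayed inverse, while $R(j)=\frac1{j+1}$ fills the lower band. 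The main thing to watch is the admissible range of $n$: the clean identities $R(n)=\frac1{n+1}$ and the exact inverse hold in the regime where Theorem \ref{m-a-c:det} applies, so that no $R(n)$ with $n\le m-2$ is simultaneously required to be $0$ and $\frac1{n+1}$; I would state the conclusion under that hypothesis and appeal to the Remark after Theorem \ref{m-a-c:det} for the boundary reduction to (\ref{cau:det}).
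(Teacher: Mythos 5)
Your proposal is correct and follows exactly the route the paper intends (the paper omits the proof of this corollary, deferring to Trudi's formula and Lemma \ref{lema} just as in Corollaries \ref{coro1}--\ref{coro3}): take $\alpha_n=c_{n,\ge m}^\ast/n!$ with $R(j)=\frac{1}{j+1}$ for $j\ge m-1$ and $R(j)=0$ for $j\le m-2$, apply Trudi's formula for the multinomial sum, and apply the inversion lemma for the two matrix identities. Two of your side remarks are worth keeping: the weight constraint you derive, $(m-1)t_{m-1}+mt_m+\cdots+nt_n=n$, is the correct one (the constraint displayed in the corollary is shifted by one and appears to be a typo), and your restriction of the middle identity to the range where $R(n)=\frac{1}{n+1}$ rather than $0$, i.e.\ $n\ge m-1$, is likewise warranted since for $1\le n\le m-2$ that determinant equals $0$.
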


In \cite{KLM}, restricted Bernoulli numbers $B_{n,\le m}$ and associated Bernoulli numbers $B_{n,\ge m}$ are introduced.  However, similarly to incomplete Cauchy numbers, these incomplete Bernoulli numbers must be modified to have determinant expressions.

Define {\it modified restricted Bernoulli numbers} $B_{n,\le m}^\ast$ by
\begin{equation}
\frac{t}{E_m(t)-1}=\sum_{n=0}^\infty B_{n,\le m}^\ast\frac{t^n}{n!}
\label{m-r-b:gen}
\end{equation}
and define
{\it modified associated Bernoulli numbers} $B_{n,\ge m}^\ast$ by
\begin{equation}
\frac{t}{e^t-E_m(t)-1+t}=\sum_{n=0}^\infty B_{n,\ge m}^\ast\frac{t^n}{n!}\,,
\label{m-a-b:gen}
\end{equation}
where
$$
E_m(t)=1+t+\frac{t^2}{2!}+\cdots+\frac{t^m}{m!}\,.
$$

Then these modified incomplete Bernoulli numbers have determinant expressions.  The proofs are similar and omitted.

\begin{theorem}
For integers $n\ge 1$ and $m\ge 2$,
$$
B_{n,\le m}^\ast=(-1)^n n!\left|\begin{array}{cccccc}
\frac{1}{2!}&1&0&&&\\
\vdots&\ddots&\ddots&\ddots&&\\
\frac{1}{m!}&&&&&\\
0&\ddots&&&&0\\
&&&&&1\\
&&0&\frac{1}{m!}&\cdots&\frac{1}{2!}
\end{array}\right|\,.
$$
\label{m-r-b:det}
\end{theorem}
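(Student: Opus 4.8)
The plan is to reduce the statement to Theorem~\ref{det}, following verbatim the structure of the proof of Theorem~\ref{m-r-c:det} but with $1/(j+1)$ replaced by $1/(j+1)!$ and an extra alternating sign carried throughout. First I would extract a linear recurrence for $B_{n,\le m}^\ast$ from the defining generating function (\ref{m-r-b:gen}). Writing $E_m(t)-1=\sum_{k=1}^m t^k/k!$ and cancelling the common factor $t$, the identity $t=(E_m(t)-1)\sum_{l\ge 0}B_{l,\le m}^\ast t^l/l!$ becomes
\[
1=\left(\sum_{j=0}^{m-1}\frac{t^j}{(j+1)!}\right)\left(\sum_{l=0}^\infty B_{l,\le m}^\ast\frac{t^l}{l!}\right).
\]
Comparing the coefficient of $t^n$ (which is $1$ for $n=0$ and $0$ for $n\ge 1$) and isolating the $j=0$ term, which equals $B_{n,\le m}^\ast/n!$, yields for $n\ge 1$
\[
\frac{B_{n,\le m}^\ast}{n!}=-\sum_{j=1}^{m-1}\frac{1}{(j+1)!}\frac{B_{n-j,\le m}^\ast}{(n-j)!},
\]
where I adopt the convention $B_{k,\le m}^\ast=0$ for $k<0$ so that the upper limit may be written uniformly as $m-1$ (the terms with $j>n$ vanish automatically). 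Note $B_{0,\le m}^\ast=1$.

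Next I would apply Theorem~\ref{det} with
\[
\alpha_n=(-1)^n\frac{B_{n,\le m}^\ast}{n!},\qquad R(j)=\begin{cases}\dfrac{1}{(j+1)!}&1\le j\le m-1,\\ 0&j\ge m,\end{cases}
\]
so that $\alpha_0=1$ as required. A short sign computation turns the recurrence above into the form (\ref{10b}): the factor $(-1)^{j-1}$ in (\ref{10b}) combines with the $(-1)^{n-j}$ coming from $\alpha_{n-j}$ to give $(-1)^{n-1}$, and the overall minus sign in the recurrence restores $(-1)^n B_{n,\le m}^\ast/n!=\alpha_n$. Theorem~\ref{det} then expresses $\alpha_n$ as the determinant whose first column is $R(1),\dots,R(n)$ with $1$'s on the superdiagonal; since $R(j)=1/(j+1)!$ for $j\le m-1$ and $R(j)=0$ for $j\ge m$, this is exactly the banded determinant in the statement. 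Multiplying through by $(-1)^n n!$ recovers $B_{n,\le m}^\ast$.

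The one point requiring care is the index shift caused by the single factor $t$ in the numerator of (\ref{m-r-b:gen}): because the lowest-order term of $E_m(t)-1$ is precisely $t$, the reciprocal polynomial $(E_m(t)-1)/t=\sum_{j=0}^{m-1}t^j/(j+1)!$ has constant term $1$, and it is exactly this normalization that allows the coefficient comparison to isolate $B_{n,\le m}^\ast$ from the $j=0$ term (an alternative, equivalent route is to equate coefficients of $t=\!(E_m(t)-1)\sum B_{l,\le m}^\ast t^l/l!$ directly and then reindex $n\mapsto n+1$, but the displayed normalization is cleaner). The remaining verification—matching signs against (\ref{10b}) and checking the base case $n=1$, where the recurrence gives $B_{1,\le m}^\ast=-1/2=(-1)^1\,1!\cdot(1/2!)$—is routine and exactly parallels the displayed proof of Theorem~\ref{m-r-c:det}.
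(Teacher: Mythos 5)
Your proof is correct and follows exactly the route the paper intends: the paper omits the proof of this theorem with the remark that it is ``similar'' to that of Theorem~\ref{m-r-c:det}, and your argument is precisely that adaptation --- derive the recurrence $\frac{B_{n,\le m}^\ast}{n!}=-\sum_{j=1}^{m-1}\frac{1}{(j+1)!}\frac{B_{n-j,\le m}^\ast}{(n-j)!}$ from (\ref{m-r-b:gen}) and feed it into Theorem~\ref{det} with the sign twist $\alpha_n=(-1)^nB_{n,\le m}^\ast/n!$, exactly as the paper does for the classical Bernoulli determinant (\ref{ber:det}). The only cosmetic difference is that you invoke Theorem~\ref{det} directly with a piecewise-defined $R(j)$ rather than re-running the row-expansion induction as in the displayed proof of Theorem~\ref{m-r-c:det}; both are the same computation.
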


\noindent
{\it Remark.}
If $n\le m-1$, this result is reduced to (\ref{ber:det}).

\begin{theorem}
For integers $n$ and $m$ with $n-1\ge m\ge 2$,
$$
B_{n,\ge m}^\ast=(-1)^nn!\left|\begin{array}{cccccc}
0&1&0&&&\\
\vdots&\ddots&\ddots&\ddots&&\\
\frac{1}{m!}&&&&\ddots&\\
\frac{1}{(m+1)!}&\ddots&&&\ddots&0\\
\vdots&&\ddots&&\ddots&1\\
\frac{1}{(n+1)!}&\cdots&\frac{1}{(m+1)!}&\frac{1}{m!}&\cdots&0
\end{array}\right|\,.
$$
\label{m-a-b:det}
\end{theorem}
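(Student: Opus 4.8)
The plan is to prove Theorem \ref{m-a-b:det} exactly as its Cauchy counterpart Theorem \ref{m-a-c:det} was handled, namely by reducing the claimed Hessenberg determinant to the general criterion of Theorem \ref{det} (the ``if and only if'' between \eqref{10a} and \eqref{10b}). Concretely, I would apply Theorem \ref{det} with
$$
\alpha_n=(-1)^n\frac{B_{n,\ge m}^\ast}{n!},\qquad
R(j)=\begin{cases}\dfrac{1}{(j+1)!}, & j\ge m-1,\\[1mm] 0, & 1\le j\le m-2.\end{cases}
$$
With these choices the determinant in \eqref{10a} has precisely the first column $0,\dots,0,\frac{1}{m!},\frac{1}{(m+1)!},\dots,\frac{1}{(n+1)!}$ (the zeros coming from $R(j)=0$ for $j<m-1$) and the overall prefactor $(-1)^n n!$, matching the statement.

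The first step is to extract the defining recurrence from the generating function \eqref{m-a-b:gen}. Writing its denominator as $t+\sum_{j\ge m}t^j/j!$ and dividing through by $t$, it becomes $1+\sum_{j\ge m-1}t^j/(j+1)!$, so that
$$
1=\Bigl(1+\sum_{j\ge m-1}\frac{t^j}{(j+1)!}\Bigr)\sum_{l\ge 0}B_{l,\ge m}^\ast\frac{t^l}{l!}.
$$
Comparing the coefficient of $t^n$ for $n\ge 1$ yields $\dfrac{B_{n,\ge m}^\ast}{n!}=-\sum_{j=m-1}^n \dfrac{1}{(j+1)!}\,\dfrac{B_{n-j,\ge m}^\ast}{(n-j)!}$, while the coefficient of $t^0$ gives $B_{0,\ge m}^\ast=1$, i.e. $\alpha_0=1$ as required by Theorem \ref{det}.

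The second step is the sign and index bookkeeping. Multiplying the recurrence by $(-1)^n$ and substituting $B_{n-j,\ge m}^\ast/(n-j)!=(-1)^{n-j}\alpha_{n-j}$, the factors $(-1)^n(-1)^{n-j}=(-1)^j$ collapse the expression into $\alpha_n=\sum_{j=m-1}^n(-1)^{j-1}R(j)\,\alpha_{n-j}$, which is exactly \eqref{10b} (the terms $1\le j\le m-2$ contributing nothing since $R(j)=0$). Theorem \ref{det} then produces the asserted determinant, finishing the proof. I expect the main obstacle to lie not in this direct route but in the alternative argument that mirrors the omitted proof of Theorem \ref{m-a-c:det}: if one instead inducts on $n$ by expanding the determinant along its first row, the delicate point is the two–regime case analysis $n+1<2m$ versus $n+1\ge 2m$ (parallel to the $n<2m$ / $n\ge 2m$ split in the associated Fubini proof, Theorem \ref{a-fubini:det}), since only in the second regime does the repeated expansion ``wrap around'' and regenerate the full sum $\sum_{j=m-1}^n$. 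Keeping the alternating signs and the factorial indices correctly aligned across that expansion, together with tracking the global $(-1)^n$, is where the real care is needed.
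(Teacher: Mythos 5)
Your argument is correct and is essentially the paper's own: the authors omit this proof, declaring it ``similar'' to those of Theorems \ref{m-r-c:det} and \ref{m-a-c:det}, which extract the convolution recurrence from the generating function and then run exactly the first-row-expansion induction that Theorem \ref{det} packages once and for all, so invoking Theorem \ref{det} with $\alpha_n=(-1)^nB^\ast_{n,\ge m}/n!$ and $R(j)=1/(j+1)!$ for $j\ge m-1$, $R(j)=0$ otherwise, is the same argument in streamlined form, and your sign and index bookkeeping checks out. One caveat deserves mention: the denominator of \eqref{m-a-b:gen} as printed, $e^t-E_m(t)-1+t=t-1+\sum_{j\ge m+1}t^j/j!$, is \emph{not} the series $t+\sum_{j\ge m}t^j/j!=e^t-E_{m-1}(t)+t$ that you work with; taken literally it would give $B^\ast_{0,\ge m}=0$ and contradict the remark that $m=2$ recovers \eqref{ber:det}, so the printed definition is evidently a typo and your silent correction is the intended reading --- but you should state that you are making it rather than asserting the two denominators coincide.
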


\noindent
{\it Remark.}
If $m=2$, this result is reduced to (\ref{ber:det}).

\begin{Cor}
For $n\geq 1$
\begin{multline*}
\frac{(-1)^nB_{n,\leq m}^*}{n!}=\sum_{t_1 + 2t_2 + \cdots + (m-1)t_{m-1}=n}\binom{t_1+\cdots + t_{m-1}}{t_1, \dots, t_{m-1}}\\ \times (-1)^{n-t_1-\cdots - t_{m-1}} \left(\frac{1}{2!}\right)^{t_1}\left(\frac{1}{3!}\right)^{t_2}\cdots \left(\frac{1}{m!}\right)^{t_{m-1}}.
\end{multline*}
Moreover,
$$\begin{vmatrix} \frac{-B_{1, \leq m}^*}{1!} & 1 & & \\
\frac{(-1)^2B_{2, \leq m}^*}{2!} & \ddots &  \ddots & \\
\vdots & \ddots &  \ddots & 1\\
\frac{(-1)^nB_{n, \leq m}^*}{n!} & \ddots &  \frac{(-1)^2B_{2, \leq m}^*}{2!} & \frac{-B_{1, \leq m}^*}{1!} \\
 \end{vmatrix}=0,$$
and
 \begin{multline*}
 \begin{bmatrix} 1 &  &  & & \\
\frac{-B_{1, \leq m}^*}{1!} & 1 &   &  & \\
\frac{(-1)^2B_{2, \leq m}^*}{2!} & \frac{-B_{1, \leq m}^*}{1!}  &  1  &  & \\
\vdots &  &  \ddots &  & \\
\frac{(-1)^nB_{n, \leq m}^*}{n!} & \cdots &  \frac{(-1)^2B_{2, \leq m}^*}{2!} & \frac{-B_{1, \leq m}^*}{1!} &1 \\
 \end{bmatrix}^{-1}\\
 =\begin{bmatrix} 1 &  &  & & \\
\frac{1}{2!} & 1 &   &  & &&\\
\frac{1}{3!} & \frac{1}{2!}  &  1  & &&  & \\
\vdots &  &  \ddots &  &  &&\\
\frac{1}{m!} & \frac{1}{(m-1)!} & \frac{1}{(m-2)!} &   \cdots &  &  & \\
0 &\frac{1}{m!} & &   \cdots &  & 1 & \\
\vdots & & &   \cdots &  & \frac{1}{2!} &1 \\
0 & &  &   \cdots & \frac{1}{m!} &\cdots &\frac{1}{2!}& 1 \\
 \end{bmatrix}.
 \end{multline*}
\end{Cor}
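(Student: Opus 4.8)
The plan is to recognize the displayed determinant of Theorem~\ref{m-r-b:det} as an instance of the generic determinant~(\ref{10a}), and then to read off all three assertions from the combinatorial form of Trudi's formula and from Lemma~\ref{lema}, exactly as was done for the Fubini corollaries \ref{coro1}--\ref{coro3}.

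First I would put
$$
\alpha_n=\frac{(-1)^nB_{n,\le m}^\ast}{n!},\qquad
R(j)=\begin{cases}\dfrac{1}{(j+1)!}&1\le j\le m-1,\\[1mm]0&j\ge m.\end{cases}
$$
With this choice the first column of the matrix in Theorem~\ref{m-r-b:det} is precisely $R(1),R(2),\dots,R(n)$ (the entries $1/2!,\dots,1/m!$ followed by zeros), and the prefactor $(-1)^n$ converts $B_{n,\le m}^\ast/n!$ into $\alpha_n$; hence Theorem~\ref{m-r-b:det} asserts exactly that $\alpha_n$ equals the determinant~(\ref{10a}). By the ``only if'' direction of Theorem~\ref{propo1} this is equivalent to the recurrence~(\ref{10b}) with these $R(j)$ and $\alpha_0=1$, so the hypotheses of both Trudi's formula and Lemma~\ref{lema} are satisfied.

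The explicit formula is then immediate from the combinatorial consequence of Trudi's formula recorded just after that theorem, namely
$$
\alpha_n=\sum_{t_1+2t_2+\cdots+nt_n=n}\binom{t_1+\cdots+t_n}{t_1,\dots,t_n}(-1)^{n-t_1-\cdots-t_n}R(1)^{t_1}R(2)^{t_2}\cdots R(n)^{t_n}.
$$
Since $R(j)=0$ for $j\ge m$, any composition with $t_j>0$ for some $j\ge m$ contributes $0$; the surviving terms are those supported on $t_1,\dots,t_{m-1}$, so the constraint collapses to $t_1+2t_2+\cdots+(m-1)t_{m-1}=n$. Substituting $R(j)=1/(j+1)!$ yields the displayed sum, the factor $(-1)^n$ already being absorbed into $\alpha_n$. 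The two remaining identities come from Lemma~\ref{lema}: its first part says the determinant built from $\alpha_1,\dots,\alpha_n$ equals $R(n)$, which is $0$ once $n\ge m$ (and equals $1/(n+1)!$ for $1\le n\le m-1$, the degenerate range in which the formula reduces to~(\ref{ber:det})); its second part identifies the inverse of the unit lower-triangular matrix of the $\alpha$'s as the unit lower-triangular matrix whose successive subdiagonals are $R(1),R(2),\dots$, which is exactly the banded matrix displayed, the band terminating after $R(m-1)=1/m!$ because $R(j)=0$ for $j\ge m$.

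No step is genuinely hard; the entire argument is parallel to that of Corollary~\ref{coro2}. The only point requiring care is the index bookkeeping, that is, checking that the vanishing of $R(j)$ for $j\ge m$ both truncates the Trudi sum to $t_1+2t_2+\cdots+(m-1)t_{m-1}=n$ and terminates the band of the inverse matrix at the $(m-1)$st subdiagonal, together with the observation that the sign $(-1)^n$ is carried cleanly by the definition of $\alpha_n$ rather than by any separate manipulation.
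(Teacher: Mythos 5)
Your proposal is correct and follows essentially the same route as the paper, which derives this corollary (like Corollaries \ref{coro1}--\ref{coro3}) by reading Theorem \ref{m-r-b:det} as an instance of (\ref{10a}) with $\alpha_n=(-1)^nB_{n,\le m}^\ast/n!$ and $R(j)=1/(j+1)!$ for $1\le j\le m-1$, $R(j)=0$ for $j\ge m$, and then invoking Trudi's formula and Lemma \ref{lema}. Your remark that the middle determinant equals $R(n)$, hence is $0$ only for $n\ge m$ and equals $1/(n+1)!$ in the range $1\le n\le m-1$, is a point of precision the paper's bare statement glosses over.
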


\begin{Cor}
For $n\geq 1$
\begin{multline}\frac{(-1)^nB_{n, \geq m}^*}{n!}=\sum_{mt_{m-1} + (m+1)t_{m} + \cdots + (n+1)t_n=n}\binom{t_{m-1}+\cdots + t_n}{t_{m-1}, \dots, t_n}\\ \times (-1)^{n - t_{m-1} - \cdots - t_n}\left(\frac{1}{m!}\right)^{t_{m-1}}\left(\frac{1}{(m+1)!}\right)^{t_{m}}\cdots \left(\frac{1}{(n+1)!}\right)^{t_n}.
\end{multline}
Moreover,
$$\begin{vmatrix} \frac{-B_{1, \geq m}^*}{1!} & 1 & & \\
\frac{(-1)^2B_{2, \geq m}^*}{2!} & \ddots &  \ddots & \\
\vdots & \ddots &  \ddots & 1\\
\frac{(-1)^nB_{n, \geq m}^*}{n!} & \ddots &  \frac{(-1)^2B_{2, \geq m}^*}{2!} & \frac{-B_{1, \geq m}^*}{1!} \\
 \end{vmatrix}=\frac{1}{(n+1)!},$$
and
 \begin{multline*}
 \begin{bmatrix} 1 &  &  & & \\
\frac{-B_{1, \geq m}^*}{1!} & 1 &   &  & \\
\frac{(-1)^2B_{2, \geq m}^*}{2!} & \frac{-B_{1, \geq m}^*}{1!}  &  1  &  & \\
\vdots &  &  \ddots &  & \\
\frac{(-1)^nB_{n, \geq m}^*}{n!} & \cdots &  \frac{(-1)^2B_{2, \geq m}^*}{2!} & \frac{-B_{1, \geq m}^*}{1!} &1 \\
 \end{bmatrix}^{-1}\\=\begin{bmatrix} 1 &  &  & & \\
0 & 1 &   &  & &&\\
 & 0  &  1  & &&  & \\
\vdots &  & &  \ddots  &  &&\\
\frac{1}{m!} & 0 &  &   \cdots &  &  & \vdots \\
\vdots & \ddots & &    &  & \ddots & \\
\frac{1}{n!} & & &   \cdots &  & 0&1 \\
\frac{1}{(n+1)!}  &  \frac{1}{n!}  & \cdots &  \frac{1}{m!}   & 0&\cdots & 0 & 1 \\
 \end{bmatrix}.
 \end{multline*}
\end{Cor}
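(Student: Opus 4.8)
The plan is to reduce the entire corollary to three machines already available in the paper---the general combinatorial consequence of Trudi's formula displayed just after its statement, and the two halves of Lemma~\ref{lema}---all applied to the Toeplitz--Hessenberg determinant already recorded in Theorem~\ref{m-a-b:det}. The first step is to fix the correct substitution. Comparing the matrix in Theorem~\ref{m-a-b:det} with the template of Theorem~\ref{det}, I would set
$$
\alpha_n=(-1)^n\frac{B_{n,\ge m}^\ast}{n!},\qquad
R(j)=\begin{cases}0,&1\le j\le m-2,\\ \dfrac{1}{(j+1)!},&j\ge m-1,\end{cases}
$$
so that the first column $R(1),\dots,R(n)$ reproduces the column $0,\dots,0,\frac{1}{m!},\dots,\frac{1}{(n+1)!}$ of Theorem~\ref{m-a-b:det}, and the vanishing diagonal matches $R(1)=0$. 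With this reading Theorem~\ref{det} supplies the recurrence $\alpha_n=\sum_{j=m-1}^{n}(-1)^{j-1}R(j)\alpha_{n-j}$ underlying that determinant, so no new recurrence has to be proved.

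For the explicit formula I would feed this $R$ into the Trudi expansion
$$
\alpha_n=\sum_{t_1+2t_2+\cdots+nt_n=n}\binom{t_1+\cdots+t_n}{t_1,\dots,t_n}(-1)^{n-t_1-\cdots-t_n}R(1)^{t_1}\cdots R(n)^{t_n}.
$$
Since $R(1)=\cdots=R(m-2)=0$, every term carrying a positive exponent on one of these variables vanishes, so the sum collapses to compositions with $t_1=\cdots=t_{m-2}=0$; the surviving factors are $\bigl(\tfrac{1}{m!}\bigr)^{t_{m-1}}\cdots\bigl(\tfrac{1}{(n+1)!}\bigr)^{t_n}$. Substituting $\alpha_n=(-1)^n B_{n,\ge m}^\ast/n!$ then yields the first displayed identity. (Here I would double-check the weight attached to the lowest surviving variable: the factor $\tfrac{1}{m!}=R(m-1)$ is paired with $t_{m-1}$, so it enters the partition constraint with weight $m-1$.)

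The two matrix statements follow from Lemma~\ref{lema} under the same substitution. The inversion half gives $R(n)$ as the Hessenberg determinant built from $\alpha_1,\dots,\alpha_n$; since $\alpha_k=(-1)^k B_{k,\ge m}^\ast/k!$ these are exactly the entries $\tfrac{-B_{1,\ge m}^\ast}{1!},\tfrac{(-1)^2 B_{2,\ge m}^\ast}{2!},\dots$ in the corollary, and for $n\ge m-1$ the value $R(n)=\tfrac{1}{(n+1)!}$ is the claimed determinant. The second half identifies the inverse of the unit lower-triangular Toeplitz matrix with entries $\alpha_k$ as the unit lower-triangular Toeplitz matrix with entries $R(k)$; reading off $R(1)=\cdots=R(m-2)=0$ and $R(k)=\tfrac{1}{(k+1)!}$ for $k\ge m-1$ reproduces the displayed inverse.

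I expect the only genuine obstacle to be bookkeeping rather than mathematics: fixing the sign convention $\alpha_n=(-1)^n B_{n,\ge m}^\ast/n!$ and, above all, the one-step shift in the index range---the nonzero $R(j)$ begin at $j=m-1$, not $j=m$, because of the extra $+t$ in the defining denominator~(\ref{m-a-b:gen}). Keeping this shift consistent across the partition constraint, the entries of both Hessenberg/Toeplitz matrices, and the range of validity (one needs $n\ge m-1$ for $R(n)=1/(n+1)!$ to hold, so the value $\tfrac{1}{(n+1)!}$ is the correct right-hand side only in that range) is where care is required; everything else is a direct transcription of Theorem~\ref{m-a-b:det}, Trudi's formula, and Lemma~\ref{lema}.
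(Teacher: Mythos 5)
Your proposal is correct and is exactly the route the paper takes: the paper offers no written proof for this corollary, merely invoking Trudi's formula and Lemma~\ref{lema} applied to the Hessenberg matrix of Theorem~\ref{m-a-b:det}, and your substitution $\alpha_n=(-1)^nB_{n,\ge m}^\ast/n!$ with $R(j)=0$ for $j\le m-2$ and $R(j)=1/(j+1)!$ for $j\ge m-1$ is the right one. The bookkeeping point you flag is in fact a genuine discrepancy with the printed statement: since the factor $\bigl(\tfrac{1}{m!}\bigr)^{t_{m-1}}$ is $R(m-1)^{t_{m-1}}$, Trudi's formula weights $t_{m-1}$ by $m-1$, so the constraint should read $(m-1)t_{m-1}+mt_m+\cdots+nt_n=n$ rather than the printed $mt_{m-1}+(m+1)t_m+\cdots+(n+1)t_n=n$ (for $m=2$, $n=1$ the printed constraint gives an empty sum while the left-hand side is $-B_1^\ast=1/2$); likewise your restriction $n\ge m-1$ for the middle determinant to equal $1/(n+1)!$ is needed, since it equals $R(n)=0$ for $n\le m-2$. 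So what you prove is the corrected form of the corollary, by the same argument the paper intends.
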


\section{The Generalized Fubini Numbers}

The {\em generalized Fubini numbers} $F_{n}^{(k)}$ are defined by
\begin{align*}
\left(\frac{1}{2-e^t}\right)^k=\sum_{j=0}^{\infty}F_{j+1}^{(k)}\frac{t^j}{j!}, \ k\in\mathbb{Z}^+.
\end{align*}

Note that
\begin{align}
\frac{F_{n+1}^{(k)}}{n!}=\sum_{j_1+j_2+\cdots +j_k=n} \frac{F_{j_1+1}}{(j_1+1)!}\frac{F_{j_2+1}}{(j_2+1)!}\cdots \frac{F_{j_k+1}}{(j_k+1)!}.
\end{align}

The generating functions of the generalized Fubini numbers for $k = 2$ and 3 are
\begin{align*}
\left(\frac{1}{2-e^t}\right)^2&= 1+2 t+4 t^2+\frac{22 t^3}{3}+\frac{77 t^4}{6}+\frac{653 t^5}{30}+\frac{6497
   t^6}{180}+\frac{74141 t^7}{1260}+ \cdots\\
\left(\frac{1}{2-e^t}\right)^3&=1+3 t+\frac{15 t^2}{2}+\frac{33 t^3}{2}+\frac{269 t^4}{8}+\frac{2601
   t^5}{40}+\frac{5809 t^6}{48}+\cdots
\end{align*}
\smallskip

Let $T_n=[t_{i,j}]$  be the $n$-square Toeplitz-Hessenberg matrix defined as in Theorem \ref{teo1}, i.e.,
$$t_{i,j}=t_{i-j}=\begin{cases}
\frac{(-1)^{i-1}}{i!}, & \text{if} \ i\geq j; \\
1, & \text{if} \ i+1=j; \\
0, & \text{otherwise.}
\end{cases}$$
Let $A$ and $C$ be matrices of size $n\times n$ and $m\times m$, respectively, and $B$ be a $n\times m$ matrix.
Since
$$\det\begin{bmatrix} A & B \\ 0 & C \end{bmatrix}=\det A \det C,$$
the principal minor $M(i)$ of the matrix $T_n$  is equal to $\frac{F_{i}}{i!}\frac{F_{n-i+1}}{(n-i+1)!}$. It follows that the principal minor  $M(i_1,i_2,\dots,i_l)$ of the matrix $T_n$  is obtained by deleting rows and columns with indices $1\leqslant i_1 < i_2 < \cdots < i_l\leqslant n$:
\begin{align}
M(i_1,i_2,\dots,i_l)=\frac{F_{i_1}}{i_1!}\frac{F_{i_2-i_1}}{(i_2-i_1)!} \cdots \frac{F_{i_l-i_{l-1}}}{(i_l-i_{l-1})!} \frac{F_{n-i_l+1}}{(n-i_l+1)!}.  \label{minorsnara}
\end{align}

Then from (\ref{minorsnara}) we have the following theorem.
\begin{theorem}
Let $S_{n-l}, (l=0, 1, 2, \dots, n-1)$ be the sum of all principal minors of the matrix $T_n$ of order $n-l$. Then
\begin{align}
S_{n-l}=\sum_{j_1+j_2+\cdots + j_{l+1}=n-l} \frac{F_{j_1}+1}{(j_1+1)!}  \frac{F_{j_2}+1}{(j_2+1)!} \cdots  \frac{F_{j_{l+1}}+1}{(j_{l+1}+1)!}.
\end{align}
\end{theorem}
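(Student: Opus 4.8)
The plan is to write $S_{n-l}$ as an explicit sum over all choices of which $l$ rows and columns to delete, to substitute the closed form (\ref{minorsnara}) for each resulting principal minor, and then to recognize the outcome as a sum over compositions. First I would record that a principal minor of $T_n$ of order $n-l$ is obtained by deleting some set of $l$ rows together with the $l$ columns carrying the same indices, so that it is indexed by a choice $1\le i_1<i_2<\cdots<i_l\le n$. Hence
\[
S_{n-l}=\sum_{1\le i_1<\cdots<i_l\le n}M(i_1,\dots,i_l),
\]
and (\ref{minorsnara}) replaces each summand by the product
\[
\frac{F_{i_1}}{i_1!}\,\frac{F_{i_2-i_1}}{(i_2-i_1)!}\cdots\frac{F_{i_l-i_{l-1}}}{(i_l-i_{l-1})!}\,\frac{F_{n-i_l+1}}{(n-i_l+1)!}
\]
of $l+1$ Fubini ratios, one for each block of the block triangular form underlying (\ref{minorsnara}).

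Second, I would change the summation variable from the deletion set to the tuple of block lengths. Put $B_1=i_1$, $B_t=i_t-i_{t-1}$ for $2\le t\le l$, and $B_{l+1}=n-i_l+1$. Since the $i_t$ are strictly increasing and lie in $\{1,\dots,n\}$, each $B_t\ge 1$, and the sum telescopes to $B_1+\cdots+B_{l+1}=n+1$; conversely any composition of $n+1$ into $l+1$ positive parts recovers the $i_t$ uniquely through partial sums, so this is the standard gap bijection. Writing $j_t=B_t-1\ge 0$ gives $j_1+\cdots+j_{l+1}=n-l$ and $\frac{F_{B_t}}{B_t!}=\frac{F_{j_t+1}}{(j_t+1)!}$, so the substitution turns $S_{n-l}$ into precisely the asserted sum over compositions $j_1+\cdots+j_{l+1}=n-l$ into $l+1$ nonnegative parts.

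The computations are routine; the one point that needs care is the bookkeeping of the two end factors together with the boundary/degenerate cases. The first and last factors of (\ref{minorsnara}) must be placed on the same footing as the interior ones $F_{i_{t+1}-i_t}/(i_{t+1}-i_t)!$, and the degenerate situations (where an index is deleted adjacent to another deletion, or at an end of $\{1,\dots,n\}$) correspond to $B_t=1$, i.e. $j_t=0$, contributing the harmless factor $F_1/1!=1$. Keeping the shift $j_t=B_t-1$ uniform across all $l+1$ factors is exactly what forces the index constraint to read $n-l$ rather than $n+1$, and is the only place where a slip would corrupt the identity. Finally, if a closed form is wanted, the resulting composition sum can be matched term by term with the convolution identity for the generalized Fubini numbers recorded just after the definition of $F_n^{(k)}$, yielding $S_{n-l}=F_{n-l+1}^{(l+1)}/(n-l)!$.
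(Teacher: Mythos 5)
Your derivation follows exactly the route the paper intends: the paper's entire ``proof'' of this theorem is the sentence ``Then from (\ref{minorsnara}) we have the following theorem,'' and you supply the missing step by summing (\ref{minorsnara}) over all deletion sets $1\le i_1<\cdots<i_l\le n$ and converting to a composition via the gap bijection $B_1=i_1$, $B_t=i_t-i_{t-1}$, $B_{l+1}=n-i_l+1$, $j_t=B_t-1$. That bookkeeping is carried out correctly and reproduces the displayed formula (reading the numerator $F_{j_t}+1$ as the intended $F_{j_t+1}$), so as a derivation of the stated identity from the stated (\ref{minorsnara}) there is nothing to add.

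There is, however, one genuine problem, and your own computation points straight at it: you find $B_1+\cdots+B_{l+1}=n+1$, whereas the $l+1$ diagonal blocks of a principal minor of order $n-l$ must have sizes summing to $n-l$. Deleting rows and columns $i_1<\cdots<i_l$ from the Hessenberg matrix $T_n$ leaves a block lower triangular matrix whose diagonal blocks are copies of $T_{i_1-1},T_{i_2-i_1-1},\dots,T_{n-i_l}$, so the correct version of (\ref{minorsnara}) is
\[
M(i_1,\dots,i_l)=\frac{F_{i_1-1}}{(i_1-1)!}\,\frac{F_{i_2-i_1-1}}{(i_2-i_1-1)!}\cdots\frac{F_{n-i_l}}{(n-i_l)!}\,,
\]
every subscript being one less than printed. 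A check with $n=2$, $l=1$ makes this concrete: $M(1)=M(2)=1$, so $S_1=2$, which agrees with $\sum_{j_1+j_2=1}F_{j_1}F_{j_2}/(j_1!\,j_2!)=2$ but not with $\sum_{j_1+j_2=1}F_{j_1+1}F_{j_2+1}/\bigl((j_1+1)!(j_2+1)!\bigr)=3$. Your argument therefore inherits an off-by-one from (\ref{minorsnara}) and proves a formula that is not actually the sum of the principal minors. The repair is painless: with the corrected minors your bijection runs verbatim with $j_t$ equal to the block sizes themselves, giving $S_{n-l}=\sum_{j_1+\cdots+j_{l+1}=n-l}\prod_t F_{j_t}/j_t!=[t^{n-l}]\left(\frac{1}{2-e^t}\right)^{l+1}$, which is exactly the value $F^{(l+1)}_{n-l+1}/(n-l)!$ that your closing sentence names. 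In short: same approach as the paper, executed correctly modulo an error you should have caught when the block sizes failed to add up to the order of the minor.
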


Since the coefficients  of the characteristic polynomial of a matrix are, up to the sign, sums of principal minors of the matrix, then we have the following.

\begin{Cor}
The generalized Fubini number $\frac{F_{n-l+1}^{(l+1)}}{(n-l+1)!}$ is equal, up to the sign, to the coefficient of $x^l$ in the characteristic polynomial $p_{n}(x)$ of the matrix  $T_n$
\end{Cor}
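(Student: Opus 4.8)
The plan is to read the desired coefficient directly off the characteristic polynomial by means of the standard identity between its coefficients and the signed sums of principal minors, and then to invoke the two facts already in hand: the evaluation of $S_{n-l}$ in the preceding theorem and the convolution identity for the generalized Fubini numbers.

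First I would recall that for any $n\times n$ matrix $A$,
$$
p_n(x)=\det(xI-A)=\sum_{k=0}^{n}(-1)^k\sigma_k\,x^{n-k},
$$
where $\sigma_k$ is the sum of all order-$k$ principal minors of $A$ (equivalently, the $k$-th elementary symmetric function of its eigenvalues). Taking $A=T_n$ and extracting the coefficient of $x^l$, that is $k=n-l$, shows that this coefficient equals $(-1)^{n-l}S_{n-l}$, where $S_{n-l}$ is exactly the sum of order-$(n-l)$ principal minors computed in the preceding theorem. This accounts for the phrase \emph{up to the sign}: the sign is $(-1)^{n-l}$.

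Second, I would match $S_{n-l}$ with a generalized Fubini number. The preceding theorem gives
$$
S_{n-l}=\sum_{j_1+\cdots+j_{l+1}=n-l}\frac{F_{j_1+1}}{(j_1+1)!}\cdots\frac{F_{j_{l+1}+1}}{(j_{l+1}+1)!},
$$
which is precisely the right-hand side of the convolution identity $\frac{F_{N+1}^{(k)}}{N!}=\sum_{j_1+\cdots+j_k=N}\prod_r\frac{F_{j_r+1}}{(j_r+1)!}$ stated earlier, specialized to $N=n-l$ and $k=l+1$. Hence $S_{n-l}=\dfrac{F_{n-l+1}^{(l+1)}}{(n-l)!}$, and the coefficient of $x^l$ in $p_n(x)$ equals $(-1)^{n-l}\dfrac{F_{n-l+1}^{(l+1)}}{(n-l)!}$, the claimed generalized Fubini number up to sign.

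Third, I would emphasize that the substantive content is the preceding theorem rather than the corollary itself, which becomes immediate once $S_{n-l}$ is known. That theorem in turn rests on the minor formula (\ref{minorsnara}): deleting the rows and columns indexed by $1\le i_1<\cdots<i_l\le n$ respects the Hessenberg (single superdiagonal) block structure of $T_n$, so the principal minor factors as a product of smaller Fubini determinants, each equal to a ratio $\frac{F_a}{a!}$. The substitution $a_1=i_1$, $a_r=i_r-i_{r-1}$ for $2\le r\le l$, and $a_{l+1}=n-i_l+1$ turns the ordered index set into a composition $a_1+\cdots+a_{l+1}=n+1$ with positive parts, and writing $a_r=j_r+1$ converts it into $j_1+\cdots+j_{l+1}=n-l$. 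The only delicate bookkeeping, and thus the main obstacle, is checking this reindexing together with the correct tracking of the sign $(-1)^{n-l}$ coming from the characteristic polynomial; the factorial normalization in the denominator is then forced by the convolution identity.
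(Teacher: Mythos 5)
Your argument is correct and is exactly the route the paper takes: the coefficient of $x^l$ in $\det(xI-T_n)$ equals $(-1)^{n-l}S_{n-l}$, and the preceding theorem together with the convolution identity for $F^{(k)}_{j+1}$ identifies $S_{n-l}$ with a generalized Fubini number. Your normalization $S_{n-l}=F^{(l+1)}_{n-l+1}/(n-l)!$ is the one consistent with the paper's definition of $F^{(k)}_{j+1}$ and with its $T_5$ example (where the coefficient of $x$ is $-\tfrac{77}{6}$, the coefficient of $t^{4}$ in $(2-e^t)^{-2}$), so the denominator $(n-l+1)!$ in the corollary as printed appears to be a typo rather than a gap in your reasoning.
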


For example,
$$T_5=\begin{bmatrix}
 \frac{1}{1!} & 1 & 0 & 0 & 0 \\
 -\frac{1}{2} &  \frac{1}{1!} & 1 & 0 & 0 \\
 \frac{1}{3!} & -\frac{1}{2!} &  \frac{1}{1!} & 1 & 0 \\
 -\frac{1}{4!} & \frac{1}{3!} & -\frac{1}{2!} &  \frac{1}{1!} & 1 \\
 \frac{1}{5!} & -\frac{1}{4!} & \frac{1}{3!} & -\frac{1}{2!} &  \frac{1}{1!} \\
\end{bmatrix}$$
 then its characteristic polynomial is  $p_5(x)=-x^5+5 x^4-12 x^3+\frac{33 x^2}{2}-\frac{77 x}{6}+\frac{541}{120}$. So, it is clear that the coefficient of $x^1$ is $\frac{F_{5}^{(2)}}{5!}=\frac{77}{6}$. Note that this is the 4-th coefficient of the ordinary generating function of $(\frac{1}{2-e^t})^2$.

\section*{Acknowledgements}

The research of Jos\'e L. Ram\'irez was partially supported by Universidad Nacional de Colombia, Project No. 37805.

\end{document}